\def\N{{\mathbb{N}}}
\def\Z{{\mathbb{Z}}}
\def\R{{\mathbb{R}}}
\newcommand{\ntn}{\hbar_n}
\newcommand{\dist}{\operatorname{dist}}
\newcommand{\base}{\varepsilon}
\newcommand{\PP}{\mathcal P}
\newcommand{\ssm}{\smallsetminus}
\newcommand{\bnd}{\operatorname{bnd}}
\newcommand{\card}{\operatorname{card}}
\newcommand{\Ends}{\operatorname{Ends}}
\newcommand{\Cuts}{\operatorname{Cuts}}
\newcommand{\Ed}{\operatorname{Ed}}
\newcommand{\overbar}[1]{\mkern 1.5mu\overline{\mkern-1.5mu#1\mkern-1.5mu}\mkern 1.5mu}
\title{Boundaries of $\Z^n$-free groups}
\author{Andrei Malyutin}
\address{St. Petersburg Department of V.A.Steklov Mathematical Institute, Fontanka 27, 191023, 
St.Petersburg, Russia}
\email{andreymalyutin@gmail.com}
\author{Tatiana Nagnibeda}
\address{Section de math${\rm \acute{e}}$matiques, Universit${\rm \acute{e}}$ de Gen${\rm \grave{e}}$ve,
2-4 rue du Li${\rm \grave{e}}$vre, Case postale 64, 1211 Gen${\rm \grave{e}}$ve 4, Suisse}
\email{nagnibeda@gmail.com}
\author{Denis Serbin}
\address{Department of Mathematical Sciences, Stevens Institute of Technology,
1 Castle Point on Hudson, Hoboken, NJ 07030, USA} 
\email{d.e.serbin@gmail.com}
\keywords{Poisson boundary, $\Lambda$-tree, group action, $\Z^n$-free group}
\subjclass[2010]{20E08, 20F65, 20F69, 37B05, 60J50}
\date{}
\newtheorem{corollary}{Corollary}
\newtheorem{prop}{Proposition}
\newtheorem{theorem}{Theorem}
\newtheorem{lemma}{Lemma}
\newtheorem{defn}{Definition}
\newtheorem{remark}{Remark}
\newtheorem{asser}{Assertion}
\begin{document}

\begin{abstract}
In this paper we study random walks on a finitely generated group $G$ which has a free action on a $\Z^n$-tree. We show that if $G$ is non-abelian and acts minimally, freely and without inversions on a locally finite $\Z^n$-tree $\Gamma$ with the set of open ends $\Ends(\Gamma)$, then for every non-degenerate probability measure $\mu$ on $G$ there exists a unique $\mu$-stationary probability measure $\nu_\mu$ on $\Ends(\Gamma)$, and the space $(\Ends(\Gamma), \nu_\mu)$ is a $\mu$-boundary. Moreover, if $\mu$ has finite first moment with respect to the word metric on $G$ (induced by a finite generating set), then the measure space $(\Ends(\Gamma), \nu_\mu)$ is isomorphic to the Poisson--Furstenberg boundary of $(G, \mu)$.
\end{abstract}

\maketitle

\tableofcontents

\section{Introduction}
\label{sec:intro}

In this paper we study random walks on $\Z^n$-free groups and the boundaries of such groups. This family of groups is a particularly nice and well-studied subclass of groups acting freely on $\Lambda$-trees ({\em $\Lambda$-free groups}), where $\Lambda$ is an arbitrary ordered abelian group.

The theory of group actions on $\Lambda$-trees goes back to the work \cite{Lyndon:1963} of Lyndon who introduced abstract length functions on groups, axiomatizing Nielsen cancellation method; he initiated the study of groups with real valued length functions. Later, in \cite{Chiswell:1976}, Chiswell related such length functions with group actions on $\Z$- and $\R$-trees, providing a construction of the tree on which the group acts. At about the same time, Tits in \cite{Tits:1977} gave the first formal definition of $\R$-tree, which is a geodesic metric space with a tree-like structure. Eventually, in \cite{Morgan_Shalen:1984}, Morgan and Shalen introduced $\Lambda$-trees for an arbitrary ordered abelian group $\Lambda$ and the general form of Chiswell's construction. A $\Lambda$-tree is a metric space whose metric takes values in $\Lambda$ and is subject to certain tree axioms. The theory of group actions on such objects was consistently developed by Alperin and Bass (see \cite{Alperin_Bass:1987}), where authors state the fundamental problem: find the group theoretic information carried by a $\Lambda$-tree action, in particular, the structure of $\Lambda$-free groups. Whereas the case of Archimedean free actions, that is, when $\Lambda = \R$, is basically closed by the Rips' Theorem that describes finitely generated $\R$-free groups (see \cite{GLP:1994, Bestvina_Feighn:1995}), the general non-Archimedean case is still open, though a lot of progress was made (see \cite{Bass:1991, Guirardel:2004, KMS:2011, KMS_Survey:2013}).

\smallskip

The case when $\Lambda = \Z^n$ with the right lexicographic order received a lot of attention due to the natural combinatorial structure of $\Z^n$-trees. This structure was exploited in \cite{KMRS:2012} and \cite{KMS:2016} to obtain a description of finitely generated $\Z^n$-free groups in terms of free products with amalgamation and HNN-extensions of a particular type (see also \cite{Bass:1991}). The class of $\Z^n$-free groups is a natural generalization of free groups which contains limit groups, $\R$-free groups, etc. and which is closed under taking subgroups, free products, and amalgamated free products along maximal cyclic subgroups ($n$ is not preserved in general). All these groups are hyperbolic relative to non-cyclic maximal abelian subgroups (see \cite{Dahmani:2003, Guirardel:2004}) 
(hyperbolic if all maximal abelian subgroups are cyclic), coherent, with nice algorithmic properties.

\smallskip

Let $(X, d)$ be a $\Lambda$-tree. One can define an equivalence relation on maximal linear subtrees of $X$, which are called {\em $X$-rays}. The equivalence classes under this relation are called {\em ends} of $X$ and the subset of {\em open ends}, that is, ends not ending at a point of $X$, is denoted by $\Ends(X)$. An {\em open cut} of $X$ is a pair $(X_0, X_1)$ of open subtrees $X_0$ and $X_1$ of $X$ such that $X = X_0 \bigsqcup X_1$. An open cut $(X_0, X_1)$ corresponds to a pair of open ends $(e_0, e_1)$ respectively of $X_0$ and $X_1$ and we denote the set of all such pairs $(e_0, e_1)$ arising from open cuts by $\Cuts(X)$ (see Subsection \ref{subs:lambda_trees} for details).

Now that the sets $\Ends(X)$ and $\Cuts(X)$ are defined, we can introduce the set
$$\overbar X \colon = X \cup \Ends(X) \cup \Cuts(X).$$
In the case when $\Lambda = \Z^n$ the set $\overbar X$ can be equipped with a metric, so that $\overbar X$ is compact in the topology induced by this metric (see (\ref{eq:d_f}), Subsection \ref{subs:compact_general}).

The main result of the paper is formulated below.

\begin{theorem}
\label{th:Poisson}
Let $G$ be a countable non-abelian group acting freely and without inversions on a $\Z^n$-tree $\Gamma$, for some $n \in \N$. Assume also that the action of $G$ is minimal, that is, $\Gamma$ has no proper $G$-invariant subtrees. Then, for every non-degenerate probability measure $\mu$ on~$G$: 
\begin{enumerate}
\item there is a unique $\mu$-stationary measure $\nu_\mu$ on the space $\Ends(\Gamma)$ of ends of~$\Gamma$; this measure is continuous (i.e. it takes zero value on each point); the measure space $(\Ends(\Gamma), \nu_\mu)$ is a $\mu$-boundary of $(G, \mu)$,

\item for almost every path $\tau = \{\tau_i\}$ of the $\mu$-random walk, all the sequences $\{\tau_i \cdot v\}$, $v \in \Gamma$, converge to a random end $\omega(\tau) \in \Ends(\Gamma)$; the distribution of the limits $\omega(\tau)$ is given by the $\mu$-stationary measure~$\nu_\mu$,

\item if $G$ is finitely generated and $\mu$ has finite first moment with respect to the word metric on $G$ (induced by a finite generating set), then the measure space $(\Ends(\Gamma), \nu_\mu)$ is isomorphic to the Poisson--Furstenberg boundary of $(G, \mu)$.
\end{enumerate}
\end{theorem}

In the proof of the above theorem we use methods of \cite{Furstenberg:1973} and \cite{CartwrightSoardi:1989}, and the structure of the proof is as follows. At first, in Section \ref{sec:compact}, we introduce a metric $d_f$ on $\overbar \Gamma$ which makes $\overbar\Gamma$ a compact metric space equipped with a continuous action of $G$ (Theorem \ref{th:the-action}). Next, we prove that for every non-degenerate measure $\mu$ on $G$ there exists a unique $\mu$-stationary measure $\nu_\mu$ on $\overbar\Gamma$ and $(\overbar\Gamma, \nu_\mu)$ is a $\mu$-boundary of $(G, \mu)$ (see Proposition \ref{prop:compact_proximal}), which implies that the space $\overbar\Gamma$ is $\mu$-proximal. Then, since the measure $\nu_\mu$ on $\overbar\Gamma$ is concentrated on $\Ends(\Gamma)$ (see Proposition \ref{prop:where-stationary-measures-lives}), we deduce that $(\Ends(\Gamma), \nu_\mu)$ is a $\mu$-boundary too, and hence $\mu$-proximal (see Corollary \ref{prop:ends_proximal}). Finally, we show that if $\mu$ has finite first moment with respect to the word metric on $G$ induced by a finite generating set, then $(\Ends(\Gamma), \nu_\mu)$ is a maximal $\mu$-boundary (Proposition \ref{prop:maximality}).

\smallskip

Observe that there are other constructions of the Poisson boundaries of $\Z^n$-free groups. First of all, since every $\Z^n$-free group $G$ is hyperbolic relative to its non-cyclic abelian subgroups (follows from results of \cite{Guirardel:2004} and \cite{Dahmani:2003}), one can study its Floyd boundary (see \cite{Gerasimov_Potyagailo:2013, Gerasimov:2012}) which is non-trivial. Hence, from \cite{Karlsson:2003} it follows that the Floyd boundary of $G$ is its Poisson boundary. Another approach is to use the fact that $G$ is $CAT(0)$ with isolated flats (see 
\cite{Bumagin_Kharlampovich:2013}), so one can construct its Poisson boundary using results of  \cite{Karlsson_Margulis:1999} (see also \cite{Nevo_Sageev:2013}).

The advantage of the construction presented here is that it comes naturally from the action of a $\Z^n$-free group $G$ on the underlying $\Z^n$-tree without reference to general results about relatively hyperbolic groups and $CAT(0)$-groups, and provides a description of the boundary in terms of the underlying $\Z^n$-tree.

\bigskip

{\bf Acknowledgments:} The authors are very grateful to Vadim Kaimanovich and to Anders Karlsson for many helpful comments and suggestions. The authors also acknowledge support from the Swiss National Science Foundation. The first author is supported by RNF grant 14-11-00581.

\section{Preliminaries: Poisson--Furstenberg boundaries}
\label{subs:poisson_bound}

\subsection{Random walks and Poisson--Furstenberg boundaries}

Let $G$ be a countable group and let $\mu$ be a probability measure on~$G$. The measure $\mu$ is {\em non-degenerate} if its support generates $G$ as a semigroup. The {\em right-hand random walk on $G$ with distribution~$\mu$} (or, briefly, {\em $\mu$-random walk}) is the time-homogeneous Markov chain whose state space is $G$, the transition probabilities are given by $P(g, h) = \mu(g^{-1}h)$, and the initial distribution is concentrated at the identity~$1_G$ of the group. Realizations of this process are called {\em paths} of the random walk. 

The {\em path space} of the random walk $(G, \mu)$ is the measure space $(G^{\N_0}, P_\mu)$ ($\N_0 \colon = \{0, 1, \ldots\}$), where the measure $P_\mu$ is determined by the following condition. Let $C_{g_0, \ldots, g_p}$, $p \in \N_0$, denote the set of paths
\[
C_{g_0,\ldots,g_p} \colon = \left\{\{\tau_i\}_{i \in \N_0} \in {G^{\N_0}} \mid (\tau_0, \ldots, \tau_p) = (g_0, \ldots, g_p)\right\}.
\]
Then 
\[
P_\mu(C_{g_0, \ldots, g_p}) = \delta_{1_G}(g_0) \times \mu(g_0^{-1} g_1) \times \cdots \times \mu(g_{p-1}^{-1} g_p),
\]
where $\delta_{1_G}$ is the probability measure on~$G$ with $\delta_{1_G}(\{1_G\}) = 1$. The $\sigma$-algebra of $P_\mu$-measurable subsets is the $P_\mu$-completion of the $\sigma$-algebra generated by the {\em cylindrical sets\/} $C_{g_0, \ldots, g_p}$. Since $G$ is countable, the path space $(G^{\N_0}, P_\mu)$ is a Lebesgue space.

There are several ways to define the Poisson--Furstenberg boundary for random walks. In particular, the Poisson--Furstenberg boundary of the random walk $(G, \mu)$ can be defined as the {\it quotient measure space\/} of the path space $(G^{\N_0}, P_\mu)$ with respect to the {\it measurable hull\/} ($=$ {\em measurable envelope\/}) of the partition~$\zeta$ whose elements are the classes of the equivalence relation~$\thicksim$ (on the set $G^{\N_0}$ of paths) defined as follows:
\begin{equation}
\label{def:thicksim}
(\tau_i)_{i \in \N_0} \thicksim(\tau'_i)_{i \in \N_0} ~\Longleftrightarrow~\exists k, k' \colon \, \tau_{k+j} = \tau'_{k'+j} ~\forall j > 0.
\end{equation}
The measurable hull and hence the Poisson--Furstenberg boundary are objects defined {\em modulo subsets of measure~$0$} (${\rm mod}\ 0$). 

The partition $\zeta$ is invariant under the action of $G$ on $G^{\N_0}$ defined by the rule $g(\tau_0, \tau_1, \ldots) = (g \tau_0, g \tau_1, \ldots)$. This action induces a canonical action of~$G$ on the Poisson--Furstenberg boundary. 

\subsection{Furstenberg $\mu$-boundaries}

Our exposition of the theory of Furstenberg $\mu$-boundaries is based on \cite{Furstenberg:1973}.

An {\em action} of a group $G$ on a topological space~$M$ is a homomorphism from $G$ to the group $\operatorname{Homeo}(M)$ of all homeomorphisms of~$M$. An action is said to be ({\em topologically}) {\em minimal} if each orbit of the action is dense (or, equivalently, if the action has no proper closed invariant subsets). A~space endowed with an action of the group~$G$ is called a {\em $G$-space\/}. A map $f \colon X \to Y$ of $G$-spaces is said to be {\em equivariant\/} if $g(f(x)) = f(g(x))$ for all $x \in X$, $g \in G$.

We denote the space of all probability measures on $G$ by $\PP(G)$ and the space of all \footnote{In \cite{Furstenberg:1973}, the space of {\em compact-regular} probability measures is considered. We are interested in Polish (separable complete metric) spaces or Borel subsets thereof. All Borel probability measures on these spaces are compact-regular.} Borel probability measures on $M$ by $\PP(M)$. We endow $\PP(M)$ with the {\it weak* topology\/} (by duality with the space $C_b(M)$ of all bounded continuous functions on $M$). A~subbase for the weak* topology is the collection of sets of the form 
$$\left\{\nu\in\PP(M)\ \bigg|\ a< \left(\int_M f \,d\nu\right) < b\right\},\quad \text{where}~ f \in C_b(M),\ a, b \in \R.$$ 
Thus, a sequence $\{\nu_i\}_{i \in \N_0}$ of Borel probability measures converges with respect to the weak* topology ({\it converges weakly\/}) to a measure $\nu$ if and only if for all $f \in C_b(M)$ the numerical sequence $\left\{\int_M f \,d\nu_i\right\}_{i \in \N_0}$ converges to~$\int_M f \,d\nu$.

Any action of $G$ on $M$ induces an action of $G$ on $\PP(M)$: 
$$(g \cdot \theta)(E) = \theta (g^{-1} \cdot E).$$ 
If $x \in M$, then the {\em Dirac measure\/} $\delta_x \in \PP(M)$ is the measure defined by $\delta_x(E) = 1$ if $x\in{E}$ and $\delta_x(E) = 0$ otherwise (for a measurable~$E$). We say that a measure is {\em continuous} if it takes zero value on each point.

Suppose that $\mu \in \PP(G)$. A measure $\nu \in \PP(M)$ is said to be {\em $\mu$-stationary} if
$$\mu \ast \nu = \sum_{g \in G} (g \cdot \nu)\mu(g) = \nu.$$

\begin{asser}[\cite{Furstenberg:1973}]
\label{asser:stationary-measure-exists}
Let $G$ be a countable group acting on a compact metric space $M$ and let $\mu \in \PP(G)$ be an arbitrary measure. Then the set of $\mu$-stationary measures in $\PP(M)$ is non-empty.
\end{asser}

\begin{lemma}[\cite{KaimanovichMasur:1996}]
\label{lem:KaiMas}
Let $G$ be a countable group acting on a space $M$, $\mu$ a non-degenerate probability measure on $G$ and $\nu \in \PP(M)$ be a $\mu$-stationary measure. Let $E \subset M$ be a measurable subset such that for every $g \in G$ we have either $g \cdot E = E$, or $(g \cdot E) \cap E = \varnothing$. Suppose further that there is an infinite family of pairwise-disjoint sets of the form $g \cdot E$, where $g \in G$. Then $\nu(E) = 0$. In particular, if the orbit $G \cdot x$ of every point $x \in M$ is infinite, then $\nu$ is continuous.
\end{lemma}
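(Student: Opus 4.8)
The plan is to reduce everything to the behaviour of $\nu$ on the single orbit of translates $\{g\cdot E : g\in G\}$ and to exploit that, by hypothesis, these translates split into disjoint ``coset'' pieces. Write $H=\{g\in G : g\cdot E=E\}$ for the stabiliser of $E$; then $g_1\cdot E$ and $g_2\cdot E$ coincide when $g_1H=g_2H$ and are disjoint otherwise, so the distinct translates are indexed by $G/H$ and are pairwise disjoint. Since $\nu$ is a probability measure, $\sum_{gH\in G/H}\nu(g\cdot E)\le 1$; in particular, for every $\delta>0$ only finitely many cosets satisfy $\nu(g\cdot E)\ge\delta$. First I would introduce $q=\sup_{g\in G}\nu(g\cdot E)$ and argue that this supremum is attained whenever $q>0$: the finiteness remark shows that at most finitely many translates have measure $\ge q/2$, and the largest among these realises $q$. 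The goal is then to prove $q=0$, which gives $\nu(E)\le q=0$ at once.

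The heart of the argument is a maximum-principle step fed by stationarity. From $\mu\ast\nu=\nu$ one reads off, for each $g_0\in G$,
$$\nu(g_0\cdot E)=\sum_{g\in G}\mu(g)\,(g\cdot\nu)(g_0\cdot E)=\sum_{g\in G}\mu(g)\,\nu(g^{-1}g_0\cdot E).$$
If $g_0$ realises the maximum, so that $\nu(g_0\cdot E)=q$, then since every summand is $\le q$ and the weights $\mu(g)$ sum to $1$, equality forces $\nu(g^{-1}g_0\cdot E)=q$ for every $g\in\operatorname{supp}\mu$. Hence the set $\mathcal M=\{g\in G:\nu(g\cdot E)=q\}$ is closed under left multiplication by the inverses of elements of $\operatorname{supp}\mu$.

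Here I would invoke non-degeneracy. Since $\operatorname{supp}\mu$ generates $G$ as a semigroup, so does $\{g^{-1}:g\in\operatorname{supp}\mu\}$ (the inverse of a product of generators is a product of their inverses), and therefore a non-empty set $\mathcal M$ that is stable under left multiplication by all these inverses is stable under left multiplication by the whole semigroup they generate, namely $G$; thus $\mathcal M=G$. Consequently, if $q>0$, every translate $g\cdot E$ has measure exactly $q$; but by hypothesis there is an infinite pairwise-disjoint family of such translates, whence their measures sum to $\infty$, contradicting that $\nu$ is a probability measure. Therefore $q=0$ and $\nu(E)=0$. For the final assertion, apply the statement with $E=\{x\}$ for an arbitrary $x\in M$: the condition on $E$ holds trivially (distinct points give disjoint singletons), and infiniteness of the orbit $G\cdot x$ supplies the required infinite disjoint family, so $\nu(\{x\})=0$ for every $x$ and $\nu$ is continuous. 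The point demanding the most care is the passage from stability of $\mathcal M$ under $\operatorname{supp}\mu^{-1}$ to full $G$-invariance, which is exactly where non-degeneracy enters and where a weaker hypothesis on $\mu$ would break the argument; verifying that the supremum $q$ is genuinely attained is the remaining small technical point.
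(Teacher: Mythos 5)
Your proof is correct. The paper does not prove Lemma~\ref{lem:KaiMas} (it is quoted from \cite{KaimanovichMasur:1996}), and your argument is precisely the standard maximum-principle proof from that source: the translates of $E$ are indexed by cosets of the stabiliser and are pairwise disjoint, the supremum $q$ of their measures is attained when positive, stationarity propagates the value $q$ from a maximiser to all of $G$ via non-degeneracy, and an infinite disjoint family of sets of equal positive measure is impossible.
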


\begin{lemma}
\label{lem:minimal-stationary} 
Let $G$ be a countable group acting minimally on a topological space $M$, $\mu$ be a non-degenerate measure on $G$, and $\nu \in \PP(M)$ be a $\mu$-stationary measure. Let $E \subset M$ be a non-empty open subset. Then $\nu(E) > 0$.
\end{lemma}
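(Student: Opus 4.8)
The plan is to argue by contradiction: assuming $\nu(E)=0$ for some non-empty open $E$, I would use stationarity to propagate the vanishing of $\nu$ from $E$ to its entire $G$-orbit, and then invoke minimality together with the countability of $G$ to cover $M$ by countably many $\nu$-null sets.

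First I would pass to an auxiliary measure of full support. Iterating the identity $\mu\ast\nu=\nu$ gives $\mu^{\ast n}\ast\nu=\nu$ for every $n\geq 1$ (by associativity of the convolution action), and hence the probability measure $\bar\mu:=\sum_{n\geq 1}2^{-n}\mu^{\ast n}$ again satisfies $\bar\mu\ast\nu=\nu$. Since $\mu$ is non-degenerate, its support generates $G$ as a semigroup, so $\bigcup_{n\geq 1}\operatorname{supp}(\mu^{\ast n})=G$; therefore $\bar\mu(g)>0$ for every $g\in G$. Now suppose $\nu(E)=0$. Evaluating $\nu=\bar\mu\ast\nu$ on $E$ yields
\[
0=\nu(E)=\sum_{g\in G}\bar\mu(g)\,(g\cdot\nu)(E)=\sum_{g\in G}\bar\mu(g)\,\nu(g^{-1}E).
\]
Each summand is nonnegative and each coefficient $\bar\mu(g)$ is strictly positive, so every term vanishes, giving $\nu(g^{-1}E)=0$ for all $g\in G$; equivalently $\nu(gE)=0$ for all $g\in G$.

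Next I would consider $U:=\bigcup_{g\in G}gE$. As each $g$ acts by a homeomorphism, every $gE$ is open, so $U$ is open; it contains $E$, hence is non-empty, and it is manifestly $G$-invariant. By minimality of the action the complement $M\ssm U$, being closed and $G$-invariant, must be empty, so $U=M$. Because $G$ is countable, $U$ is a countable union of the sets $gE$, each of $\nu$-measure zero, whence $\nu(M)=\nu(U)\leq\sum_{g\in G}\nu(gE)=0$, contradicting $\nu(M)=1$. This contradiction forces $\nu(E)>0$.

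The only genuinely delicate point is the passage from $\nu(E)=0$ to $\nu(gE)=0$ for all $g$: this is precisely where non-degeneracy enters, through the full support of $\bar\mu$, and the conclusion of the lemma is false without it. Everything else is a routine combination of the stationarity identity, minimality, and the countability of $G$ — the last being exactly what legitimizes the countable-union estimate at the end.
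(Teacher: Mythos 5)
Your proof is correct and follows essentially the same route as the paper's: iterate stationarity, use non-degeneracy of $\mu$ to conclude $\nu(g^{-1}\cdot E)=0$ for every $g\in G$, then use minimality and countability of $G$ to cover $M$ by countably many $\nu$-null translates of $E$. The only cosmetic difference is that you package the iterated convolutions into the single full-support measure $\bar\mu=\sum_{n\geqslant 1}2^{-n}\mu^{\ast n}$, whereas the paper argues with each $\mu^{\ast k}$ separately; the content is identical.
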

\begin{proof} 
Observe that by definition, the $\mu$-stationary measure $\nu$ is $\mu^{\ast k}$-stationary for every $k \in \N$. Consequently, for every $k \in \N$ we have
\begin{equation}
\label{eq:45}
\nu(E) = \sum_{g \in G} \mu^{\ast k}(g)(g \cdot \nu)(E) = \sum_{g \in G} \mu^{\ast k}(g) \nu(g^{-1} \cdot E).
\end{equation}
Suppose that $\nu(E) = 0$. Then \eqref{eq:45} implies that $\nu(g^{-1} \cdot E) = 0$ whenever there exists $k \in \N$ such that $\mu^{\ast k}(g) > 0$. Hence, $\nu(g^{-1} \cdot E) = 0$ for each $g \in G$ because $\mu$ is non-degenerate. On the other hand, since $G$ acts on $M$ minimally, and $E$ is open and non-empty, it follows that for each $x \in M$ there exists $g \in G$ such that $g \cdot x \in E$. Hence, $$\bigcup_{g \in G} g^{-1} \cdot E = \bigcup_{g \in G} g \cdot E = M.$$ Since $G$ is countable, it follows that
\begin{equation*}
1 = \nu(M) = \nu\left(\bigcup_{g \in G} g^{-1} \cdot E\right) \leqslant \sum_{g \in G} \nu(g^{-1} \cdot E) = 0.
\end{equation*}
This contradiction proves that $\nu(E) > 0$.
\end{proof}

\begin{theorem}[\cite{Furstenberg:1973,KaimanovichMasur:1996}]
\label{thm:alw-conv}
Let $G$ be a countable group acting on a compact metric space $M$. Take $\mu \in \PP(G)$ and let $\nu \in \PP(M)$ be a $\mu$-stationary measure. Then, for $P_\mu$-almost every (a.\,e.) path $\tau  = \{\tau_i\}_{i \in \N_0}$ of the $\mu$-random walk , the sequence $\{\tau_i \cdot \nu\}_{i \in \N_0}$ converges to some measure $\lambda(\tau) \in \PP(M)$. 
\end{theorem}

\begin{defn}
Suppose that $M$ is a compact metric $G$-space, $\mu \in \PP(G)$, and $\nu \in \PP(M)$ is a $\mu$-stationary measure. The pair $(M, \nu)$ is called a {\em Furstenberg $\mu$-boundary} for~$G$ if for a.\,e.~path $\tau = \{\tau_i\}_{i \in \N_0}$ of the right $\mu$-random walk, the sequence of measures $\{\tau_i \cdot \nu\}_{i \in \N_0}$ converges to some Dirac measure $\delta_{\omega(\tau)}$, where $\omega(\tau) \in M$.
\end{defn}

\begin{defn}
Let $\mu$ be a probability measure on~$G$ and let $(X, \nu)$ be the Poisson--Furstenberg boundary of the pair~$(G, \mu)$. The quotient measure space of $(X, \nu)$ with respect to a $G$-invariant measurable partition will be called a {\em $\mu$-boundary in the sense of Kaimanovich} for~$G$.
\end{defn}

\begin{asser}
\label{asser:bnd}
Let $G$ be a countable group acting on a metric space~$M$. Take a $\mu \in \PP(G)$ and let $\nu \in \PP(M)$ be a $\mu$-stationary measure. Assume that for $P_\mu$-a.\,e.~path $\{\tau_i\}_{i \in \N_0}$ of the $\mu$-random walk the sequence of measures $\{\tau_i \cdot \nu\}_{i \in \N_0}$ converges weakly to the Dirac measure $\delta_x$ for some $x \in M$. Let 
$$\bnd \colon G^{\N_0} \to M$$ 
be the {\rm(}\/$P_\mu$-a.\,e.~defined\/{\rm)} map that sends the path $\{\tau_i\}_{i \in \N_0}$ to $x$ whenever $\{\tau_i \cdot \nu\}_{i \in \N_0}$ converges to $\delta_x$. Then $\bnd$ is $P_\mu$-measurable \footnote{Since $\bnd$ is only $P_\mu$-a.\,e.~defined, it makes sense to consider measurability with respect to the $P_\mu$-completion.} and $\bnd(P_\mu) = \nu$.
\end{asser}
\begin{proof}
Let us first show that for each bounded continuous~$f \colon M \to \R$ the composition 
$$F \colon = f \circ \bnd \colon G^{\N_0} \to \R$$ 
is measurable ($P_\mu$-measurable) and
\begin{equation}
\label{eq:Gelfand}
\int_{G^{\N_0}}{F}\ d P_\mu = \int_M{f}\ d\nu.
\end{equation}
Observe that the sequence $\{F_k\}_{k \in \N_0}$ of measurable functions 
$$F_k \colon G^{\N_0} \to \R, \quad F_k(\tau) = \int_M f \ d \tau_k(\nu)$$ 
is uniformly bounded (because $f$ is bounded) and $P_\mu$-a.\,e.~converges pointwise to~$F$ (by the assumption of the assertion). Then, by the Lebesgue bounded convergence theorem $F$ is measurable \footnote{Recall that, more generally, the pointwise limit of a sequence of measurable maps from a measurable space into a metric space is measurable.} and
$$\int_{G^{\N_0}} F \ dP_{\mu} = \lim_{k \to \infty} \int_{G^{\N_0}} F_k\ dP_{\mu} = \lim_{k \to \infty} \int_{G^{\N_0}} \left(\int_M f\ d \tau_k(\nu) \right) dP_{\mu}(\tau).$$
Observe that the distribution of $\tau_k$ is given by $\mu^{*k}$ and since $\nu$ is $\mu$-stationary we have $\nu=\mu^{*k}*\nu$, whence
\begin{multline*}
\int_{G^{\N_0}} \left(\int_M f \ d \tau_k(\nu) \right) dP_{\mu}(\tau) = \sum_{g \in G} \left(\left(\int_M f \ d g(\nu) \right)\cdot \mu^{*k}(g)\right)\\ 
= \int_M f \ d (\mu^{*k}*\nu) = \int_M f \ d \nu.
\end{multline*}
We have thus proved that $F$ is measurable and 
$$\int_{G^{\N_0}}{F}\ d P_\mu = \lim_{k \to \infty}\int_{G^{\N_0}} F_k\ dP_{\mu} = \int_M{f}\ d\nu.$$

Recall that a map $m \colon \Omega \to X$ from a measure space~$\Omega$ to a metric space~$X$ is measurable (with respect to the Borel $\sigma$-algebra on~$X$) if and only if the compositions $f \circ{m} \colon \Omega \to \R$ are measurable for all bounded continuous~$f \colon X \to \R$. (This readily follows from the fact that every metric space is perfectly normal.) It then follows from the above that $\bnd$ is measurable.

Since $\bnd$ is measurable, the Borel measure $\bnd(P_\mu)$ on $M$ is well-defined and for each bounded continuous~$f \colon M \to \R$ we have
$$\int_M{f}\ d\bnd(P_\mu) = \int_{G^{\N_0}}{f \circ \bnd}\ d P_\mu,$$ 
whence it follows by~\eqref{eq:Gelfand} that
\begin{equation}
\label{eq:Gelfand-2}
\int_M{f}\ d\bnd(P_\mu) = \int_M{f}\ d\nu.
\end{equation}
Recall that, in a metric space, distinct Borel probability measures determine distinct functionals on the space of bounded continuous functions. Therefore, \eqref{eq:Gelfand-2} implies that $\bnd(P_\mu) = \nu$ as required.
\end{proof}

Assertion~\ref{asser:bnd} means that every Furstenberg $\mu$-boundary is a quotient of the measure space $(G^{\N_0}, P_\mu)$ (as a measure space, disregarding the topology) with the quotient map $\bnd$. Moreover, since $\bnd$ sends each class of the equivalence relation~$\thicksim$ (defined in \eqref{def:thicksim}) to a point, it follows that a Furstenberg $\mu$-boundary is an equivariant quotient of the Poisson--Furstenberg boundary. This proves the following corollary.

\begin{corollary}
\label{rem:quotients}
Every Furstenberg $\mu$-boundary is a $\mu$-boundary in the sense of 

Kaimanovich.
\end{corollary}

\begin{defn} (See \cite{Furstenberg:1973}.)  
Suppose that $(M, d)$ is a metric $G$-space, $\mu \in \PP(G)$. Let 
$$\mu_{(n)} = (\mu + \mu^{*2} + \cdots + \mu^{*n})/n.$$
$M$ is a said to be {\em $\mu$-proximal} if for all $x$, $y \in M$ and $\varepsilon > 0$ we have
$$\mu_{(n)}\{g | d(gx, gy) > \varepsilon\} \xrightarrow[n \to \infty]{} 0.$$ 
$M$ is called {\em mean-proximal} if it is $\mu$-proximal for all non-degenerate $\mu \in \PP(G)$. 
\end{defn}

\begin{theorem}
\label{thm-def:mean-proximal}
Let $G$ be a countable group acting on a compact metric space~$M$. Then the following conditions are equivalent\/{\rm:}
\begin{itemize}
\item[\rm(a)] $M$ is mean-proximal.

\item[\rm(b)] For any non-degenerate $\mu \in \PP(G)$ and $\mu$-stationary $\nu \in \PP(M)$, the pair $(M, \nu)$ is a Furstenberg $\mu$-boundary for~$G$.

\item[\rm(c)] For each non-degenerate $\mu \in \PP(G)$ there exists a unique $\mu$-stationary $\nu \in \PP(M)$ and the pair $(M, \nu)$ is a Furstenberg $\mu$-boundary for~$G$.
\end{itemize}
\end{theorem}
\begin{proof}
The equivalence $(a) \Leftrightarrow (b)$ follows directly from \cite[Theorem~14.1]{Furstenberg:1973}. The implication $(a) \Rightarrow (c)$ is proved in \cite[Lemma\,3.1]{Malyutin_Vershik:2008}. The implication $(c) \Rightarrow (b)$ is trivial.
\end{proof}

\subsection{The Strip Criterion of Kaimanovich} 

We use the following corollary of the `Strip Criterion', established by Kaimanovich in \cite{Kaimanovich:2000}.


\begin{corollary}[\cite{Kaimanovich:2000}]
\label{cor:strip-criterion}
Let $G$ be a finitely generated group with the induced word metric~${|\cdot|}$. Let $\mu$ be a probability measure on $G$ with finite first moment $\sum|g| \mu(g)$. Let $\check\mu$ be the {\em reflected measure} defined by $\check\mu(g) = \mu(g^{-1})$. Let $(B_+, \lambda_+)$ and $(B_-, \lambda_-)$ be $\mu$- and $\check\mu$-boundaries {\rm(}\/in the sense of Kaimanovich\/{\rm)}, respectively. If there exists a measurable $G$-equivariant map $S$ assigning to pairs of points $(b_-, b_+) \in B_- \times B_+$ non-empty ``strips'' $S(b_-, b_+) \subset G$ such that for $(\lambda_- \times \lambda_+)$-almost every $(b_-,b_+) \in B_- \times B_+$ we have
\begin{equation}
\label{eq:thin}
\frac1i \log \card \{g \in S(b_-,b_+) \mid |g| \leqslant i\}  \xrightarrow[i \to \infty]{}0,
\end{equation}
then $(B_+, \lambda_+)$ is isomorphic to the Poisson--Furstenberg boundary of the pair $(G, \mu)$.
\end{corollary}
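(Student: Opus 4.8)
The plan is to prove Corollary~\ref{cor:strip-criterion} by reducing it to the Strip Criterion, Theorem~\ref{thm:strip-criterion}. The key observation is that a finite first moment condition controls the entropy and simultaneously converts the general gauge-theoretic asymptotic into the concrete word-length estimate~\eqref{eq:thin}. So first I would fix the gauge: since $G$ is finitely generated with a word metric $|\cdot|$, the natural gauge is $\mathcal{G}_k = \{g \in G : |g| \leqslant k\}$, so that the gauge function coincides with the word length, $|\cdot|_{\mathcal{G}} = |\cdot|$.

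\smallskip

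Next I would verify the two hypotheses of Theorem~\ref{thm:strip-criterion} that are not assumed verbatim in the corollary. The first is finite entropy $H(\mu) < \infty$. This should follow from the finite first moment assumption combined with finite generation: with a word metric, the number of elements $g$ with $|g| \leqslant r$ grows at most exponentially (it is bounded by $(2s+1)^r$ where $s$ is the number of generators), so $-\log\mu(g)$ can be controlled against $|g|$ up to the logarithm of ball-sizes, and a standard argument shows $\sum|g|\mu(g) < \infty$ forces $H(\mu) = -\sum \mu(g)\log\mu(g) < \infty$. The second is the asymptotic speed: I would invoke the fact that for a measure of finite first moment the word length $|\tau_i|$ of the random walk grows at most linearly, i.e.\ $|\tau_i|/i$ is almost surely bounded (by the Kingman subadditive ergodic theorem applied to $|\tau_i|$, whose expectation is subadditive because $|\tau_{i+j}| \leqslant |\tau_i| + |\tau_i^{-1}\tau_{i+j}|$ and the increments are i.i.d.). Thus there is a constant $\ell$ with $|\tau_i|/i \to \ell$ almost surely, hence in probability.

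\smallskip

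With these in hand, the main step is to translate~\eqref{eq:thin} into the convergence required by Theorem~\ref{thm:strip-criterion}. Under the chosen gauge, the quantity inside the logarithm is
\begin{equation*}
\left|S(b_-,b_+)\,g \cap \mathcal{G}_{|\tau_i|}\right| = \left|\{h \in S(b_-,b_+)\,g : |h| \leqslant |\tau_i|\}\right|.
\end{equation*}
Translating on the right by a \emph{fixed} $g$ changes word length only by the bounded amount $|g|$, so $\{h \in S(b_-,b_+)g : |h| \leqslant r\}$ and $\{h \in S(b_-,b_+) : |h| \leqslant r + |g|\}g$ have the same cardinality, and the effect of $g$ is an additive perturbation that is negligible after dividing by $i$. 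Using $|\tau_i| \leqslant (\ell+1)i$ eventually (almost surely), I would bound
\begin{equation*}
\frac1i \log\left|S(b_-,b_+)g \cap \mathcal{G}_{|\tau_i|}\right| \leqslant \frac1i \log\left|\{g' \in S(b_-,b_+) : |g'| \leqslant (\ell+1)i + |g|\}\right|,
\end{equation*}
and hypothesis~\eqref{eq:thin}, being a statement that the counting function of $S(b_-,b_+)$ is subexponential in the radius, forces this upper bound to tend to $0$ along the subsequence of radii $(\ell+1)i + |g|$. The hard part is making this last implication rigorous and uniform: I must argue that subexponential growth of $|\{g' \in S : |g'| \leqslant r\}|$ as $r \to \infty$ (which is what~\eqref{eq:thin} asserts, with radius $i$) yields vanishing of $\tfrac1i\log|\{g' \in S : |g'| \leqslant (\ell+1)i\}|$ along the rescaled radii; this is a reparametrization of the limit variable by the constant factor $(\ell+1)$, which preserves the conclusion precisely because the rate in~\eqref{eq:thin} is subexponential and hence insensitive to a linear change of scale. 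Once this deterministic estimate is combined with the almost-sure (hence in-probability) linear bound on $|\tau_i|$, the convergence-in-probability hypothesis of Theorem~\ref{thm:strip-criterion} holds for every fixed $g$ and $(\lambda_-\times\lambda_+)$-almost every pair, and we conclude that $(B_+, \lambda_+)$ is the Poisson(--Furstenberg) boundary of $(G,\mu)$.
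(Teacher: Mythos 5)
The paper does not prove Corollary~\ref{cor:strip-criterion}; it is quoted from \cite{Kaimanovich:2000} as a known consequence of the Strip Criterion, so there is no in-paper argument to compare against. Your reduction to Theorem~\ref{thm:strip-criterion} is correct and is the standard one: the gauge $\mathcal{G}_k = \{g : |g|\leqslant k\}$, finite entropy via the Gibbs-type bound $H(\mu) \leqslant \lambda\sum|g|\mu(g) + \log\sum e^{-\lambda|g|}$ (finite for large $\lambda$ because balls grow at most exponentially), the almost sure linear bound on $|\tau_i|$ (for which the subadditive SLLN applied to the increments already suffices; Kingman gives more than you need), and the observation that subexponentiality of $r\mapsto N(r)=|\{g'\in S : |g'|\leqslant r\}|$ is invariant under the affine reparametrization $r = (\ell+1)i+|g|$ since $N$ is nondecreasing and $\frac1i\log N((\ell+1)i+|g|) = \frac{j}{i}\cdot\frac1j\log N(j)$ with $j/i$ bounded. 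One small imprecision: the sets $\{h\in S(b_-,b_+)g : |h|\leqslant r\}$ and $\{s\in S(b_-,b_+) : |s|\leqslant r+|g|\}g$ do not have the same cardinality in general; the first is merely contained in the second (via $|s|\leqslant|h|+|g^{-1}|$), but that inclusion gives exactly the inequality you use, so the argument stands.
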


\begin{remark}
\label{rem:empty-strips}
Note that, under the Strip Criterion (Corollary~\ref{cor:strip-criterion}), the ``strips'' $S(b_-,$ $b_+)$ are required to be
\begin{enumerate}
\item[{\rm (i)}] all non-empty,

\item[{\rm (ii')}] $(\lambda_- \times \lambda_+)$-almost surely ``thin''.
\end{enumerate}

Clearly, since the strips are allowed to meet the ``thinness'' requirement $(\lambda_- \times \lambda_+)$-almost surely (not surely), we can handle the ``non-emptiness'' property in the same way. In other words, in order to use the Strip Criterion it suffices to construct a (measurable, equivariant) map $S' \colon B_- \times B_+ \to 2^G$ with strips, which are
\begin{enumerate}
\item[{\rm (i')}] $(\lambda_- \times \lambda_+)$-almost surely non-empty,

\item[{\rm (ii')}] $(\lambda_- \times \lambda_+)$-almost surely thin.
\end{enumerate}
This is clear, because we can pass from $S'$ to a map $S$ with the property~{\rm (i)} by setting $S(b_-, b_+) = G$ if $S'(b_-, b_+) = \varnothing$ and $S(b_-, b_+) = S'(b_-, b_+)$ otherwise.

Note also that, having a map with the properties {\rm (i')} and {\rm (ii')}, we can replace all non-thin strips by empty ones and thus obtain a map that has the property~{\rm (i)} and the property
\begin{enumerate}
\item[{\rm (ii)}] all strips are ``thin''.
\end{enumerate}
\end{remark}

\section{Preliminaries: $\Z^n$-free groups}
\label{subs:Z^n-free_gps}

Following \cite{Alperin_Bass:1987} we give some basic definitions from the theory of $\Lambda$-trees and then consider the case when $\Lambda = \Z^n$. All the details can be found in \cite{Alperin_Bass:1987} and \cite{Chiswell:2001}.

\subsection{$\Lambda$-trees}
\label{subs:lambda_trees}

A set $\Lambda$ equipped with addition ``$+$'' and a partial order ``$\leqslant$'' is called a {\em partially ordered} abelian group if
\begin{enumerate}
\item[(1)] $\langle \Lambda, + \rangle$ is an abelian group,

\item[(2)] $\langle \Lambda, \leqslant \rangle$ is a partially ordered set,

\item[(3)] for all $\alpha, \beta, \gamma \in \Lambda$, $\alpha \leqslant \beta$ implies $\alpha + \gamma \leqslant \beta + \gamma$.
\end{enumerate}

An abelian group $\Lambda$ is called {\em orderable} if there exists a linear order ``$\leqslant$'' on $\Lambda$, satisfying the condition (3) above. In general, the ordering on $\Lambda$ is not unique.

For elements $\alpha, \beta \in \Lambda$, the {\em closed segment} $[\alpha, \beta]$ is defined by
$$[\alpha, \beta] = \{\gamma \in \Lambda \mid \alpha \leqslant \gamma \leqslant \beta \}.$$
A  subset $C \subset \Lambda$ is called {\em convex} if for every $\alpha, \beta \in C$ the set $C$ contains $[\alpha, \beta]$. In particular, a subgroup $C$ of $\Lambda$ is convex if $[0, \beta] \subset C$ for every positive $\beta \in C$.

Let $\Lambda$ be an ordered abelian group. A {\em $\Lambda$-metric space} is a pair $(X, d)$, where $X$ is a non-empty set and $d$ is a {\em $\Lambda$-metric} on $X$, that is, a function $d \colon X \times X \to \Lambda$ satisfying the usual metric properties. $\Lambda$-metric spaces for an arbitrary ordered abelian group $\Lambda$ were first introduced by Morgan and Shalen in \cite{Morgan_Shalen:1984}.

If $\Lambda_0$ is a convex subgroup of $\Lambda$, for any point $x_0 \in X$ the subset
$$X_0 = \{y \in X \mid d(x, y) \in \Lambda_0\}$$
of $X$ is a $\Lambda_0$-metric space with respect to the metric $d_0 = d_{\mid_{X_0}}$, called a {\em $\Lambda_0$-metric subspace} of $X$.

If $(X, d)$ and $(X', d')$ are $\Lambda$-metric spaces, an {\em isometry} from $(X, d)$ to $(X', d')$ is a mapping $f \colon X \rightarrow X'$ such that $d(x, y) = d'(f(x), f(y))$ for all $x, y \in X$.

A {\em (geodesic) segment} in a $\Lambda$-metric space is the image of the isometry $\alpha \colon [a, b]_\Lambda \rightarrow X$, where $[a,b]_\Lambda$ is a segment in $\Lambda$ between some $a, b \in \Lambda$. The endpoints of the segment are $\alpha(a), \alpha(b)$. 

We call a $\Lambda$-metric space $(X, d)$ {\em geodesic} if for all $x, y \in X$, there is a segment in $X$ with the endpoints $x, y$. We call $(X, d)$ {\em geodesically linear} if for all $x, y \in X$, there is a unique segment in $X$ whose set of endpoints is $\{x, y\}$. We denote such a segment by $[x, y]$.

A {\em $\Lambda$-tree} is a non-empty $\Lambda$-metric space $(X, d)$ such that
\begin{enumerate}
\item[(T1)] $(X, d)$ is geodesically linear,

\item[(T2)] if $x, y, z \in X$, then $[x,y] \cap [x,z] = [x,w]$ for some $w \in X$; this $w$ is unique and we write $w = Y(y,x,z)$,

\item[(T3)] if $x, y, z \in X$ and $[x,y] \cap [y,z] = \{y\}$, then $[x,y] \cup [y,z] = [x,z]$.
\end{enumerate}

A non-empty convex subset $X_0$ of a $\Lambda$-tree $X$ is called a {\em subtree}. Every subtree of a $\Lambda$-tree is obviously a $\Lambda$-tree itself (the axioms (T1)--(T3) hold for $X_0$ since it is convex). $X_0$ is {\em closed} if its intersection with any closed segment of $X$ is either empty, or a closed segment, otherwise $X_0$ is {\em open}.

A $\Lambda$-tree is called {\em linear} if it is isometric to a subtree of $\Lambda$.

Let $(X, d)$ be a $\Lambda$-tree. A point $e \in X$ is an {\em end point} of $X$ if, whenever $e \in [x,y] \subset X$, we have either $e = x$, or $e = y$. A {\em linear subtree from $x \in X$ (or, originating from $x$)} is any linear subtree $L$ of $X$ such that $x$ is an end point of $L$. Observe that $L$ carries a natural linear ordering (for $y, z \in L:\ y \leqslant z \Longleftrightarrow d(x, y) \leqslant d(x, z)$) with $x$ as the least element. If there exists a maximal point $y \in L$ with respect to this ordering, then $L$ is just a closed segment $[x, y]$ in $X$. If $v \in L$, then $L_v = \{z \in L \mid v \leqslant z\}$ is a linear subtree from $v$ and $L = [x, v] \cup L_v,\ [x, v] \cap L_v = 
\{v\}$. A maximal linear subtree from $x$ in $X$ is called an {\em $X$-ray} from $x$. By \cite[Proposition 2.22]{Alperin_Bass:1987}, the relation ``$L \cap L' = L_v = L'_v$ for some $v$'' is an equivalence relation on the set of $X$-rays. The equivalence classes under this relation are called {\em ends} of $X$. We distinguish {\em closed ends} and {\em open ends} of $X$: an end $e$ is closed if all $X$-rays associated with $e$ (ending at $e$) are closed segments $[x, e]$, otherwise $e$ is open. The set of open ends of $X$ will be denoted by $\Ends(X)$.

For any $x \in X$ and $e \in \Ends(X)$ there exists a unique $X$-ray from $x$ ending at $e$ which we denote by $[x, e \rangle$. Similarly, if $e_1, e_2 \in \Ends(X)$ are distinct ends of $X$, then there exists a unique linear subtree of $X$, which we denote by $\langle e_1, e_2 \rangle$, connecting the open ends $e_1$ and $e_2$ (see \cite[Proposition 2.24]{Alperin_Bass:1987}). 

An {\em open cut} of $X$ is a pair $(X_0, X_1)$ of open subtrees $X_0$ and $X_1$ of $X$ such that $X = X_0 \bigsqcup X_1$. By \cite[Proposition 2.26]{Alperin_Bass:1987}, every open cut $(X_0, X_1)$ uniquely corresponds to a pair of open ends $(e_0, e_1)$ respectively of $X_0$ and $X_1$. Denote the set of all such pairs $(e_0, e_1)$ arising from open cuts by $\Cuts(X)$.

Suppose $Y \subseteq X$ is a subtree of $X$. Then all open ends of $Y$ are also open ends of $X$, from which it follows that $\Cuts(Y) \subseteq \Cuts(X)$. At the same time, not all $Y$-rays are $X$-rays, so in general $\Ends(Y) \not\subseteq \Ends(X)$.

Next, if $x, y, z \in X$, then $[x, y] \cap [x, z] = [x, w]$ for some $w \in X$ by definition of $\Lambda$-tree. We denote the point $w$ by $Y(x, y, z)$. By \cite[Proposition 2.11]{Alperin_Bass:1987}, $Y(x, y, z)$ does not depend on the order in the triple $\{x, y, z\}$, that is, for example, $Y(y, x, z) = Y(x, y, z)$. It also follows that $Y(x, y, z) = [x, y] \cap [x, z] \cap [y, z]$. We are going to call $Y(x, y, z)$ the {\em median of the triple $\{x, y, z\}$}.

\smallskip

We say that a group $G$ acts on a $\Lambda$-tree $X$ if any element $g \in G$ defines an isometry $g \colon X \rightarrow X$. An action on $X$ is {\em non-trivial} if there is no point in $X$ fixed by all elements of $G$. Note that every group has a {\em trivial action} on any $\Lambda$-tree, when all group elements act as identity. An action of $G$ on $X$ is {\em minimal} if $X$ does not contain a proper $G$-invariant subtree.

Observe that an action of $G$ on $X$ induces actions respectively on $\Ends(X)$ and $\Cuts(X)$.

Next, $g \in G$ is called {\em elliptic} if it has a fixed point. An isometry $g \in G$ is called an {\em inversion} if it does not have a fixed point, but $g^2$ does. If $g$ is not elliptic and not an inversion, then it is called {\em hyperbolic}. For a hyperbolic element $g \in G$ define the characteristic set
$$Axis(g) = \{ p \in X \mid [g^{-1} \cdot p, p] \cap [p, g \cdot p] = \{p\} \},$$
which is called the {\em axis of $g$}. $Axis(g)$ meets every $\langle g \rangle$-invariant subtree of $X$.

A group $G$ acts {\em freely} and {\em without inversions} on a $\Lambda$-tree $X$ if for all $1_G \neq g \in G$, $g$ acts as a hyperbolic isometry. In this case we also say that $G$ is {\em $\Lambda$-free}.  Observe that, if $G$ is $\Lambda$-free and $f,g$ are hyperbolic, then $[f, g] = 1_G$ implies $Axis(f) = Axis(g)$, hence we denote $Axis(g)$ by $Axis(C_G(g))$.

\subsection{Groups acting on $\Z^n$-trees}
\label{subs:Z^n_trees}

Now suppose $\Lambda = \Z^n$ is considered with the right lexicographic order and let $(X, d)$ be a $\Z^n$-tree. Observe that $\Z^n$ contains the minimal positive element $(1, 0, \ldots, 0)$ which we are going to denote by $1$ (it will be clear from the context whether $1$ denotes an element of $\Z^n$, or a natural number).

Let $k \in [1, n]$. We say that $x, y \in X$ are {\em $\Z^k$-equivalent} ($x \sim_{\Z^k} y$) if $d(x, y) \in \Z^k$, that is, $d(x, y) = (a_1, \ldots, a_k, 0, \ldots, 0)$. From metric axioms it follows that ``$\sim_{\Z^k}$'' is an equivalence relation and we call the corresponding equivalence classes {\em maximal $\Z^k$-subtrees} of $X$. Denote by $\Xi_k(X)$ the set of all maximal $\Z^k$-subtrees of $X$.

Observe that $X / \sim_{\Z^k}$ is a $\Z^{n-k}$-tree with a metric $d'$ induced from $d$ (see, for example, \cite{KMS_Survey:2013} for details). The set of vertices of $X / \sim_{\Z^k}$ can be naturally identified with $\Xi_k(X)$, so one can lift $d'$ to the metric $d_k : \Xi_k(X) \to \Z^{n-k}$ on $\Xi_k(X)$ as follows: if $\pi : X \to X / \sim_{\Z^k}$ is the canonical projection, then for $S, T \in \Xi_k(X)$ define $d_k(S, T) \colon = d'(\pi(S), \pi(T))$.

\smallskip

Let $G$ be a group acting freely and without inversions on $X$ (in this case $G$ is called {\em $\Z^n$-free}). It is not hard to discover the structure of $G$ using Bass-Serre theory. By contracting elements of $\Xi_{n-1}(X)$ to points we obtain from $X$ a $\Z$-tree $Y$ equipped with the action of $G$ inherited from $X$. From Bass-Serre Theory it follows that $\Psi_G = Y / G$ is a graph in which vertices and edges correspond to $G$-orbits of vertices and edges in $Y$ and $G$ is isomorphic to the fundamental group of the graph of groups associated with $\Psi_G$. More precisely, if $v$ ranges through the set of vertices of $\Psi_G$, then $G$ can be obtained from groups $G_v$ (each $G_v$ is a 
$\Z^{n-1}$-free group isomorphic to the $G$-stabilizer of some vertex of $Y$) by means of amalgamated free products and HNN extensions along free abelian groups $G_e$ of rank not greater than $n-1$. Here, every $G_e$ is isomorphic to the $G$-stabilizer of some edge (an ordered pair of adjacent vertices) of $Y$.

In the case when $G$ is finitely generated, $\Psi_G$ is finite (see, for example, \cite{KMS:2016}). Now, since $G$ and all $G_e$ are finitely generated, and $\Psi_G$ is finite, it follows that all $G_v$ are also finitely generated (see, \cite[Proposition 29, Proposition 35]{Cohen}). Using induction on $n$ one can similarly show that the $G$-stabilizer of any maximal $\Z^k$-subtree of $X$, where $k \in [1, n-1]$, is also finitely generated. In particular, it follows that if $G$ is finitely generated, then it is finitely presented.

If $G$ is finitely generated and $H \leqslant G$ is also finitely generated, then by \cite[Theorem 65]{KMS_Survey:2013} it follows that $H$ is quasi-isometrically embedded into $G$. We are going to use this fact in the proof of Proposition \ref{prop:maximality}.

\smallskip

In our further investigations we are going to need the following technical definition (see \cite{KMS:2011} for details).

For every $v_0, v_1 \in X$ such that $d(v_0, v_1) = 1$ we call the ordered pair $(v_0, v_1)$ the {\em edge} from $v_0$ to $v_1$. Here, if $e = (v_0, v_1)$, then denote $v_0 = o(e),\ v_1 = t(e)$ which are respectively the {\em origin} and {\em terminus} of $e$. Denote by $\Ed(X)$ the set of edges of $X$. If $Y \subseteq X$ is a subtree of $X$, then we can define $\Ed(Y)$ as above by replacing $X$ with $Y$ since the metric on $Y$ is induced from $X$. Obviously, $\Ed(Y) \subseteq \Ed(X)$.

\section{From $\Z^n$-trees to metric spaces}
\label{sec:compact}

Let $\Gamma$ be a countable $\Z^n$-tree with a designated point $\base$. In this section we construct a compact metric space $\overbar{\Gamma}$ associated with $\Gamma$. Next, we study various properties of $\overbar\Gamma$ and show, in particular, that any isometric action of a group $G$ on $\Gamma$ extends to a continuous action of $G$ on $\overbar\Gamma$.

\subsection{Compactification of a $\Z^n$-tree}
\label{subs:compact_general}

Let $(\Gamma, d)$ be a countable $\Z^n$-tree. Define
$$\overbar\Gamma: = \Gamma \cup \Ends(\Gamma) \cup \Cuts(\Gamma),$$
where $\Ends(\Gamma)$ and $\Cuts(\Gamma)$ are the sets of open ends and cuts of $\Gamma$ (see Section \ref{subs:lambda_trees} for the definitions).

Recall that if $x \in \Gamma$ and $e \in \Ends(\Gamma)$, then $[x, e \rangle$ is the unique $\Gamma$-ray from $x$ to $e$. Similarly, $\langle e_1, e_2 \rangle$ is the unique linear subtree connecting distinct open ends $e_1, e_2$ of $\Gamma$.

Now, let $a, b \in \overbar\Gamma$. If $a = b$, in the case when $a \notin \Gamma$, define $\lfloor a, b \rfloor = \varnothing$, and when $a \in \Gamma$, define $\lfloor a, b \rfloor = \{a\} = \{b\}$. Otherwise, if $a \neq b$, then we define $\lfloor a, b \rfloor$ to be a linear subtree of $\Gamma$ as follows.
\begin{itemize}
\item If $a, b \in \Gamma$, then $\lfloor a, b \rfloor = [a, b]$.

\item If $a, b \in \Ends(\Gamma)$, then $\lfloor a, b \rfloor = \langle a, b \rangle$.

\item If $a \in \Gamma,\ b \in \Ends(\Gamma)$, then $\lfloor a, b \rfloor = [a, b \rangle$.

\item Let $a \in \Gamma$ and $b = (e_0, e_1) \in \Cuts(\Gamma)$, where $e_i \in \Ends(\Gamma_i),
\Gamma = \Gamma_0 \bigsqcup \Gamma_1$ and $a \in \Gamma_0$. Then, define $\lfloor a, b \rfloor = [a, e_0 \rangle$.

\item Let $a \in \Ends(\Gamma)$ and $b = (e_0, e_1) \in \Cuts(\Gamma)$, where $e_i \in \Ends(\Gamma_i),
\Gamma = \Gamma_0 \bigsqcup \Gamma_1$. Then, $a$ is an end of one of the subtrees $\Gamma_0,\ \Gamma_1$, say $a \in \Ends(\Gamma_0)$, so, we define $\lfloor a, b \rfloor = \langle a, e_0 \rangle$.

\item Let $a = (e_0, e_1)$, where $e_i \in \Ends(\Gamma_i), \Gamma = \Gamma_0 \bigsqcup \Gamma_1$, and $b = (f_0, f_1) \in \Cuts(\Gamma)$. It follows that $b$ is an open cut of one of the subtrees $\Gamma_0,\ \Gamma_1$, say $b \in \Cuts(\Gamma_0)$. Now, since $e_0 \in \Ends(\Gamma_0)$ and $b \in \Cuts(\Gamma_0)$, replacing $\Gamma$ with $\Gamma_0$ in the definition case above we can assume that $\lfloor e_0, b \rfloor$ is defined for $\Gamma_0$ and set $\lfloor a, b \rfloor = \lfloor e_0, b \rfloor$.
\end{itemize}

It is easy to see from the definition above that $\lfloor a, b \rfloor = \lfloor b, a \rfloor$ for any $a, b \in \overbar\Gamma$. Also, if $\lfloor a, b \rfloor \neq \varnothing$, then $a$ and $b$ can be considered as ends of $\lfloor a, b \rfloor$ (closed if points in $\Gamma$, open otherwise).

\begin{lemma}
\label{le:all-ends}
If $a \in \Gamma$, then the map $\psi \colon b \mapsto \lfloor a, b \rfloor$ establishes a one-to-one correspondence between $\overbar\Gamma$ and the set of all linear subtrees from $a$ in $\Gamma$.
\end{lemma}
\begin{proof} 
From the definition above it follows that $\lfloor a, b_1 \rfloor \neq \lfloor a, b_2 \rfloor$ if $b_1 \neq b_2$. Hence, $\psi$ is injective. In order to show that it is also surjective, for each linear subtree $L$ of $\Gamma$ from $a$ we show that there exists $b \in \overbar\Gamma$ such that $L = \lfloor a, b \rfloor$.

\smallskip

If $L$ is closed, then $L = [a, b]$ for some $b \in \Gamma$. That is, $L = \lfloor a, b \rfloor$.

Now, let $L$ be open. If $L$ is maximal, then it is a $\Gamma$-ray and $L = [a, b \rangle$ for some $b \in \Ends(\Gamma)$ and we have $L = \lfloor a, b \rfloor$. 

If $L$ is not maximal, then there exists a subtree $\Gamma'$ of $\Gamma$ such that $L$ is a $\Gamma'$-ray from $a$ (for example, $L$ itself is such a subtree). The union of all such subtrees $\Gamma'$ is a subtree $\Gamma_0$ of $\Gamma$ since every $\Gamma'$ contains $L$. In fact, from the construction of $\Gamma_0$ it follows that it is the maximal subtree (by inclusion) of $\Gamma$ such that $L$ is a $\Gamma_0$-ray from $a$.

Observe that $\Gamma_0$ is open. Indeed, since $L$ is not maximal in $\Gamma$, it can be extended to a $\Gamma$-ray $L'$ from $a$ and there exists $x \in L' \ssm L$. Hence, $x \notin \Gamma_0$ (because $L$ is maximal in $\Gamma_0$) and $\Gamma_0 \cap [a, x] = L$ which is not a closed segment. That is, $\Gamma_0$ cannot be closed, hence, it is open. 

Next, consider $\Gamma_1 = \Gamma \ssm \Gamma_0$. We are going to show that $\Gamma_1$ is a subtree of $\Gamma$. Observe that $L_1 = L' \ssm L$ is a linear tree contained in $\Gamma_1$ and $x$ defined above belongs to $L_1$. If $y \in \Gamma_1$ is such that $[x, y] \cap \Gamma_0 \neq \varnothing$, then $y \notin L_1$ and $\Gamma' = [y, a] \cup L$ is a subtree of $\Gamma$ such that $L$ is a $\Gamma'$-ray from $a$. But then we get a contradiction with the construction of $\Gamma_0$ since $\Gamma' \not\subseteq \Gamma_0$. The contradiction comes from the assumption that $[x, y] \cap \Gamma_0 \neq \varnothing$ and it follows that $[x, y] \in \Gamma_1$ for every $y \in \Gamma_1$. In other words, $\Gamma_1$ is convex, hence, it is a subtree of $\Gamma$. 

Moreover, $\Gamma_1$ is open. Assume on the contrary that $\Gamma_1$ is closed and consider $L_1 \subseteq \Gamma_1$. Since $\Gamma_1$ is closed, $L_1$ is also closed and there exists $z \in L_1$ such that $[a, x] \cap L_1 = [z, x]$. Now, since the metric on $\Gamma$ takes values in $\Z^n$, there exists a point $z_0 \in L'$ such that $d(a, z) = d(a, z_0) + 1$, where $1$ is the minimal positive element of $\Z^n$. Obviously, $z_0 \notin L_1$ because otherwise $[a, x] \cap L_1 = [z_0, x]$ -- a contradiction. Hence, $z_0 \in L$ and $L = [a, z_0]$ -- a contradiction with the fact that $L$ is open. 

Thus, $\Gamma_1$ is open and it follows that $b = (\Gamma_0, \Gamma_1)$ is an open cut such that $L = \lfloor a, b \rfloor$.

From the argument above it follows that for any linear subtree $L$ from $a$ there exists $b \in \overbar\Gamma$ such that $L = \lfloor a, b \rfloor$ and the statement of the lemma follows.
\end{proof}

Below we are going to use the following convention: for an edge $e \in Ed(\Gamma)$ and $a, b \in \overbar\Gamma$  we write $e \in \lfloor a, b \rfloor$ meaning that $e \in Ed \lfloor a, b \rfloor$. Also, if $a = b$, then obviously $\Ed \lfloor a, b \rfloor = \varnothing$ since $\lfloor a, b \rfloor$ in this case is either empty, or consist of a single point of $\Gamma$.

\begin{lemma}
\label{le:Y0}
If $a, b, c \in \overbar\Gamma$, then the following hold.
\begin{enumerate}
\item[(a)] If $\lfloor a, b \rfloor \neq \varnothing$ and $x \in \lfloor a, b \rfloor$, then $\lfloor a, b \rfloor = \lfloor a, x \rfloor \cup \lfloor x, b \rfloor$ and $\lfloor a, x \rfloor \cap \lfloor x, b \rfloor = \{ x \}$.

\item[(b)] If $x \in \Gamma$ and $\lfloor a, x \rfloor \cap \lfloor x, b \rfloor = \{ x \}$, then $\lfloor a, x \rfloor \cup \lfloor x, b \rfloor = \lfloor a, b \rfloor$.

\item[(c)] If $a, b, c$ are pairwise distinct, then $\lfloor a, c \rfloor \cap \lfloor c, b \rfloor \neq \varnothing$ unless $c = (e_0, e_1) \in \Cuts(\Gamma)$, where $e_i \in \Ends(\Gamma_i), \Gamma = \Gamma_0 \bigsqcup \Gamma_1$, is such that $\lfloor a, c \rfloor \subseteq \Gamma_0$ and $\lfloor c, b \rfloor \subseteq \Gamma_1$.

\item[(d)] $\lfloor a, c \rfloor \cup \lfloor c, b \rfloor$ is a subtree of $\Gamma$.

\item[(e)] $\lfloor a, b \rfloor \subseteq \lfloor a, c \rfloor \cup \lfloor c, b \rfloor$.

\item[(f)] $\Ed \lfloor a, b \rfloor \subseteq \Ed \lfloor a, c \rfloor \cup \Ed \lfloor c, b \rfloor$.

\item[(g)] $\Ed\lfloor a, b \rfloor \cap \Ed\lfloor a, c \rfloor \cap \Ed\lfloor b, c \rfloor = \varnothing$.

\item[(h)] If $c \in \Gamma$ and $\lfloor a, b \rfloor \neq \varnothing$, then 
$$\lfloor c, a \rfloor \cap \lfloor c, b \rfloor \subseteq \lfloor c, z \rfloor$$
for any $z\in \lfloor a, b \rfloor$.
\end{enumerate}
\end{lemma}
\begin{proof}
{\bf (a)} If $a, b \in \Gamma$, then by definition of ``$\lfloor\,  ,\, \rfloor$", we have $\lfloor a, b \rfloor = [a, b]$. Since $x \in [a, b]$, we have $[a, x] \cap [x, b] = \{x\}$ and $[a, b]  = [a, x] \cup [x, b]$ by definition of $\Lambda$-tree. At the same time, $\lfloor a, x \rfloor = [a, x],\ \lfloor x, b \rfloor = [x, b]$, and the statement follows.

If $a \in \Gamma$ and $b \in \Ends(\Gamma)$, then by definition of ``$\lfloor\,  ,\, \rfloor$", we have $\lfloor a, b \rfloor = [a, b \rangle$ which is a linear subtree from $a$. Since $x \in [a, b \rangle$, it follows that $[x, b \rangle$ is a linear subtree from $x$ such that we have $[a, b \rangle = [a, x] \cup [x, b \rangle$ and $[a, x] \cap [x, b \rangle = \{ x \}$ (see \cite[Definitions 2.21]{Alperin_Bass:1987}). Now the statement follows since $[a, x] = \lfloor a, x \rfloor$ and $[x, b \rangle = \lfloor x, b \rfloor$.

If $a \in \Gamma$ and $b = (e_0, e_1) \in \Cuts(\Gamma)$, where $e_i \in \Ends(\Gamma_i), \Gamma = \Gamma_0 \bigsqcup \Gamma_1$ and $a \in \Gamma_0$, then by definition of ``$\lfloor\,  ,\, \rfloor$", we have $\lfloor a, b \rfloor = [a, e_0 \rangle$. This is a linear subtree from $a$ in $\Gamma_0$, so the required statement follows from the argument given in the previous paragraph with $\Gamma$ replaced by $\Gamma_0$.

Suppose $a, b \in \Ends(\Gamma)$. Hence, by definition of ``$\lfloor\,  ,\, \rfloor$", we have $\lfloor a, b \rfloor = \langle a, b \rangle$. Since $x \in \langle a, b \rangle$, it follows that $\langle a, x] \cap [x, b \rangle = \{x\}$. Now from \cite[Proposition 2.24]{Alperin_Bass:1987}, it follows that $\langle a, x] \cup [x, b \rangle = \langle a, b \rangle$. Hence, the statement holds since $\langle a, x]  = \lfloor a, x \rfloor$ and $[x, b \rangle = \lfloor x, b \rfloor$ by definition of ``$\lfloor\,  ,\, \rfloor$". 

Suppose $a \in \Ends(\Gamma)$ and $b = (e_0, e_1) \in \Cuts(\Gamma)$, where $e_i \in \Ends(\Gamma_i),
\Gamma = \Gamma_0 \bigsqcup \Gamma_1$. Then, assuming that $a \in \Ends(\Gamma_0)$, by definition of ``$\lfloor\,  ,\, \rfloor$", we have $\lfloor a, b \rfloor = \langle a, e_0 \rangle$. Since $e_0, a \in \Ends(\Gamma_0)$, the statement follows from the argument above with $\Gamma$ replaced by $\Gamma_0$.

Finally, suppose that $a = (e_0, e_1)$, where $e_i \in \Ends(\Gamma_i), \Gamma = \Gamma_0 \bigsqcup\Gamma_1$, and $b = (f_0, f_1) \in \Cuts(\Gamma)$. Assuming that $b \in \Cuts(\Gamma_0)$, by definition of ``$\lfloor\,  ,\, \rfloor$", we have $\lfloor a, b \rfloor = \lfloor e_0, b \rfloor$ and the statement follows from the argument above with $\Gamma$ replaced by $\Gamma_0$.

\smallskip

{\bf (b)} If $a, b \in \Gamma$, then the statement holds by definition of $\Lambda$-tree.

If $a \in \Gamma$ and $b \in \Ends(\Gamma)$, then $\lfloor a, x \rfloor = [a, x],\ \lfloor x, b \rfloor = [x, b \rangle$.
In particular, $[x, b \rangle$ is a linear subtree from $x$ and $[a, x] \cap [x, b \rangle = \{ x \}$. Hence, $[a, x] \cup [x, b \rangle$ is a linear subtree from $a$ which end at $b$ (see \cite[Definitions 2.21]{Alperin_Bass:1987}). That is, $[a, x] \cup [x, b \rangle = [a, b \rangle = \lfloor a, b \rfloor$ and the statement follows.

If $a \in \Gamma$ and $b = (e_0, e_1) \in \Cuts(\Gamma)$, where $e_i \in \Ends(\Gamma_i), \Gamma = \Gamma_0 \bigsqcup \Gamma_1$ and $a \in \Gamma_0$, then by definition we have $\lfloor a, b \rfloor = [a, e_0 \rangle$. It follows that $x \in \Gamma_0$ because otherwise we have a contradiction with $[a, x] \cap [x, b \rangle = \{ x \}$. Now, the statement follows from the argument above with $\Gamma$ replaced by $\Gamma_0$.

Suppose $a, b \in \Ends(\Gamma)$. By \cite[Proposition 2.24]{Alperin_Bass:1987}, for any $z \in \Gamma$ we have $[z, a \rangle \cup [z, b \rangle = \langle a, b \rangle$ whenever $[z, a \rangle \cap [z, b \rangle = \{ z \}$. This implies the statement because we have $\lfloor x, a \rfloor = [x, a \rangle$, $\lfloor x, b \rfloor = [x, b \rangle$, and $\lfloor a, b \rfloor = \langle a, b \rangle$ by definition of `$\lfloor\,  ,\, \rfloor$'.

Suppose $a \in \Ends(\Gamma)$ and $b = (e_0, e_1) \in \Cuts(\Gamma)$, where $e_i \in \Ends(\Gamma_i),\Gamma = \Gamma_0 \bigsqcup \Gamma_1$. Assume that $a \in \Ends(\Gamma_0)$. Hence, $x \in \Gamma_0$
because otherwise we have a contradiction with $[x, a \rangle \cap [x, b \rangle = \{ x \}$. Now, the statement follows from the argument above with $\Gamma$ replaced by $\Gamma_0$.

The case when $a, b \in \Cuts(\Gamma)$ can be similarly reduced to one of the previous cases.

\smallskip

{\bf (c)} If $c \in \Gamma$, then both $\lfloor a, c \rfloor$ and $\lfloor c, b \rfloor$ are linear subtrees of $\Gamma$ from $c$. Hence, $c \in \lfloor a, c \rfloor \cap \lfloor c, b \rfloor$ and the statement follows.

Suppose $c \in \Ends(\Gamma)$ and let $x \in \lfloor a, c \rfloor,\ y \in  \lfloor c, b \rfloor$ be arbitrary (such $x$ and $y$ exist since both $\lfloor a, c \rfloor$ and $\lfloor c, b \rfloor$ are non-empty by assumption). Hence, $[x, c \rangle$ and $[y, c \rangle$ represent the same end of $\Gamma$ and by definition of ends it follows that $[x, c \rangle \cap [y, c \rangle$ is non-empty. Hence, $\lfloor a, c \rfloor \cap \lfloor c, b \rfloor$ is also non-empty.

Now, let $c = (e_0, e_1) \in \Cuts(\Gamma)$, where $e_i \in \Ends(\Gamma_i), \Gamma = \Gamma_0 \bigsqcup \Gamma_1$. Suppose that both $\lfloor a, c \rfloor$ and $\lfloor c, b \rfloor$ belong to one of the subtrees $\Gamma_0,\ \Gamma_1$, say $\Gamma_0$. Hence, repeating the argument above with $\Gamma$ replaced by $\Gamma_0$ we obtain that $\lfloor a, c \rfloor \cap \lfloor c, b \rfloor \neq \varnothing$. 

\smallskip

{\bf (d)} If at least two of the points $a, b, c$ coincide, then {\bf (d)} trivially holds. So, suppose that $a, b, c$ are pairwise distinct.

From {\bf (c)} it follows that $\lfloor a, c \rfloor \cup \lfloor c, b \rfloor$ is connected (hence, it is a subtree of $\Gamma$) unless $c = (e_0, e_1) \in \Cuts(\Gamma)$, where $e_i \in \Ends(\Gamma_i), \Gamma = \Gamma_0 \bigsqcup \Gamma_1$, is such that $\lfloor a, c \rfloor \subseteq \Gamma_0$ and $\lfloor c, b \rfloor \subseteq \Gamma_1$. Assume the latter and take arbitrary points $x \in \lfloor a, c \rfloor,\ y \in  \lfloor c, b \rfloor$. Then, by \cite[Proposition 2.26]{Alperin_Bass:1987}, we have $[x, y] = [x, e_0 \rangle \cup [y, e_1 \rangle$. Next, by definition of ``$\lfloor\,  ,\, \rfloor$", we have $\lfloor x, c \rfloor = [x, e_0 \rangle,\ \lfloor c, y \rfloor = \langle e_1, y]$, hence,
$$[x, y] = \lfloor x, c \rfloor \cup \lfloor c, y \rfloor.$$
Applying {\bf (a)} we have
$$\lfloor a, c \rfloor \cup \lfloor c, b \rfloor = (\lfloor a, x \rfloor \cup  \lfloor x, c \rfloor) \cup (\lfloor c, y \rfloor \cup \lfloor y, b \rfloor = \lfloor a, x \rfloor \cup [x, y] \cup \lfloor y, b \rfloor$$
which is connected. Hence, $\lfloor a, c \rfloor \cup \lfloor c, b \rfloor$ is a subtree of $\Gamma$.

\smallskip

{\bf (e)} Again, if at least two of the points $a, b, c$ coincide, then {\bf (e)} trivially holds. So, suppose that $a, b, c$ are pairwise distinct.

Suppose at first that one of the subtrees $\lfloor a, c \rfloor,\ \lfloor c, b \rfloor$ is contained in the other one, say $\lfloor a, c \rfloor \subset \lfloor c, b \rfloor$. If $a \in \Gamma$, then $a \in \lfloor c, b \rfloor$. Hence, from {\bf (a)} we have
$$\lfloor c, b \rfloor = \lfloor c, a \rfloor \cup \lfloor a, b \rfloor$$
and the statement follows. Suppose $a \notin \Gamma$. Then it follows that $a \in \Cuts(\Gamma)$. Indeed, $a$ cannot be an end of $\Gamma$ since for every $z \in \lfloor c, a \rfloor$, the linear subtree $\lfloor z, a \rfloor$ is not a maximal linear subtree from $z$. Let $a = (e_0, e_1)$, where $e_i \in \Ends(\Gamma_i), \Gamma = \Gamma_0 \bigsqcup \Gamma_1$. Assume, without loss of generality, that $c$ belongs to $\Gamma_0$ and $b$ belongs to $\Gamma_1$ (as points, ends, or open cuts). From {\bf (c)} it follows that there exists $x \in \lfloor c, b \rfloor \cap \lfloor a, b \rfloor$. Indeed, since $a \in \Cuts \lfloor c, b \rfloor$, the ends $e_0$ and $e_1$ uniquely correspond to each other and they cannot be separated by $b$ because $b \neq a$. By {\bf (a)} we have
$$\lfloor c, b \rfloor = \lfloor c, x \rfloor \cup \lfloor x, b \rfloor,\ \ \lfloor a, b \rfloor = \lfloor a, x \rfloor \cup \lfloor x, b \rfloor$$
and it is enough to show that $\lfloor a, x \rfloor \subset \lfloor c, x \rfloor$. But $\lfloor a, x \rfloor = \langle a, x]$ by definition of ``$\lfloor\,  ,\, \rfloor$" and for every $y \in \lfloor c, x \rfloor \cap \Gamma_0$, by \cite[Proposition 2.26]{Alperin_Bass:1987}, we have $[y, x] = [y, e_0 \rangle \cup \langle e_1, x]$. In other words,
$$\lfloor a, x \rfloor = \langle e_1, x] \subset [y, x] \subseteq \lfloor c, x \rfloor.$$

Now suppose that $\lfloor a, c \rfloor \not\subset \lfloor c, b \rfloor,\ \lfloor c, b \rfloor \not\subset \lfloor a, c \rfloor$. From {\bf (c)} we have the following two cases.

{\bf Case I.} $\lfloor a, c \rfloor \cap \lfloor c, b \rfloor \neq \varnothing$.

Let $x \in \lfloor a, c \rfloor \ssm \lfloor c, b \rfloor$ and $y \in \lfloor c, b \rfloor \ssm \lfloor a, c \rfloor$. Observe that $\lfloor x, c \rfloor$ and $\lfloor y, c \rfloor$ are linear subtrees of $\lfloor a, c \rfloor \cup \lfloor c, b \rfloor$ (which is a subtree of $\Gamma$ by {\bf (d)}). Hence, by \cite[Proposition 2.22]{Alperin_Bass:1987}, there exists $v \in \lfloor a, c \rfloor \cup \lfloor c, b \rfloor$ such that either $\lfloor a, c \rfloor \cap \lfloor c, b \rfloor = [c, v]$ in the case when $c \in \Gamma$, or $\lfloor a, c \rfloor \cap \lfloor c, b \rfloor = \langle c, v]$ in the case when $c \notin \Gamma$. In both cases, by definition of ``$\lfloor\,  ,\, \rfloor$", we have
$$\lfloor a, c \rfloor \cap \lfloor c, b \rfloor = \lfloor c, v \rfloor$$
and $\lfloor a, v \rfloor \cap \lfloor b, v \rfloor = \{ v \}$. Now, by {\bf (b)} we have $\lfloor a, v \rfloor \cup \lfloor b, v \rfloor = \lfloor a, b \rfloor$ and
$$\lfloor a, b \rfloor = \lfloor a, v \rfloor \cup \lfloor b, v \rfloor \subset \lfloor a, c \rfloor  \cup \lfloor c, b \rfloor,$$
where the last inclusion follows from {\bf (a)}.

{\bf Case II.} $c = (e_0, e_1) \in \Cuts(\Gamma)$, where $e_i \in \Ends(\Gamma_i), \Gamma = \Gamma_0 
\bigsqcup \Gamma_1$, is such that $\lfloor a, c \rfloor \subseteq \Gamma_0$ and $\lfloor c, b \rfloor \subseteq \Gamma_1$.

By {\bf (c)} there exist $x \in \lfloor a, c \rfloor \cap \lfloor a, b \rfloor$ and $y \in \lfloor c, b \rfloor \cap \lfloor a, b \rfloor$. Hence, by {\bf (a)} we have
$$\lfloor a, b \rfloor = \lfloor a, x \rfloor \cup \lfloor x, y \rfloor \cup \lfloor y, b \rfloor$$
$$\lfloor a, c \rfloor = \lfloor a, x \rfloor \cup \lfloor x, c \rfloor,\ \ \lfloor c, b \rfloor = \lfloor c, y \rfloor \cup \lfloor y, b \rfloor.$$
Hence, in order to prove $\lfloor a, b \rfloor \subseteq \lfloor a, c \rfloor \cup \lfloor c, b \rfloor$ it is enough to show that $\lfloor x, y \rfloor \subseteq \lfloor x, c \rfloor \cup \lfloor c, y \rfloor$. But we have $\lfloor x, y \rfloor = [x, y],\ \lfloor x, c \rfloor = [x, c \rangle,\ \lfloor c, y \rfloor = \langle c, y]$, so the required follows from \cite[Proposition 2.26]{Alperin_Bass:1987}.

\smallskip

{\bf (f)} Suppose on the contrary that there exists an edge $(v_0, v_1) $ in the set $\Ed \lfloor a, b \rfloor \ssm (\Ed \lfloor a, c \rfloor \cup \Ed \lfloor c, b \rfloor) $. This implies that one of the points $v_0$ and $v_1$ lies in $\lfloor c, a \rfloor \ssm \lfloor c, b \rfloor $, while the other one is in $\lfloor c, b \rfloor \ssm \lfloor c, a \rfloor $. Then $v_0$ is not in $\lfloor v_1, c \rfloor$, while $v_1$ is not in $\lfloor v_0, c \rfloor$. 

If $\lfloor v_1, c \rfloor \cap \lfloor v_0, c \rfloor $ is empty, then {\bf (c)} implies that $v_0$ and $v_1$ are in distinct $\Z$-subtrees of $\Gamma$ so that $(v_0, v_1) $ is not an edge~--- a contradiction.

If $\lfloor v_1, c \rfloor \cap \lfloor v_0, c \rfloor$ is not empty, then \cite[Proposition 2.28]{Alperin_Bass:1987} implies that
$$\lfloor v_0, v_1 \rfloor \cap \lfloor v_0, c \rfloor \cap \lfloor v_1, c \rfloor \neq \varnothing.$$
Suppose $v \in \lfloor v_0, v_1 \rfloor \cap \lfloor v_0, c \rfloor \cap \lfloor v_1, c \rfloor \neq \varnothing$. Then $v \notin \{v_0,v_1\}$ because $v_0$ is not in $\lfloor v_1, c \rfloor$, while $v_1$ is not in $\lfloor v_0, c \rfloor$. Therefore, $[v_0, v_1] = [v_0, v, v_1] $, which means that $(v_0, v_1) $ is not an edge~--- a contradiction.

\smallskip

{\bf (g)} If at least one of the intersections $\lfloor a, b \rfloor \cap \lfloor a, c \rfloor,\ \lfloor a, b \rfloor \cap \lfloor b, c \rfloor,\ \lfloor a, c \rfloor \cap \lfloor b, c \rfloor$ is empty, then $\lfloor a, b \rfloor \cap \lfloor a, c \rfloor \cap \lfloor b, c \rfloor$ is also empty and the statement follows.

Suppose that all three intersections $\lfloor a, b \rfloor \cap \lfloor a, c \rfloor,\ \lfloor a, b \rfloor \cap \lfloor b, c \rfloor,\ \lfloor a, c \rfloor \cap \lfloor b, c \rfloor$ are non-empty. By \cite[Proposition 2.28]{Alperin_Bass:1987} it follows that 
$$\lfloor a, b \rfloor \cap \lfloor a, c \rfloor \cap \lfloor b, c \rfloor \neq \varnothing.$$
Suppose $v \in \lfloor a, b \rfloor \cap \lfloor a, c \rfloor \cap \lfloor b, c \rfloor$. Hence, from {\bf (a)} we have
$$\lfloor a, b \rfloor = \lfloor a, v \rfloor \cup \lfloor v, b \rfloor,\ \ \lfloor a, c \rfloor = \lfloor a, v \rfloor \cup \lfloor v, c \rfloor,\ \ \lfloor b, c \rfloor = \lfloor b, v \rfloor \cup \lfloor v, c \rfloor$$
and
$$\lfloor a, b \rfloor \cap \lfloor a, c \rfloor = (\lfloor a, v \rfloor \cup \lfloor v, b \rfloor) \cap (\lfloor a, v \rfloor \cup \lfloor v, c \rfloor)$$
$$= \lfloor a, v \rfloor \cup (\lfloor a, v \rfloor \cap \lfloor v, c \rfloor) \cup (\lfloor b, v \rfloor \cap \lfloor v, a \rfloor) \cup  (\lfloor b, v \rfloor \cap \lfloor v, c \rfloor)$$
$$= \lfloor a, v \rfloor \cup \{ v \} \cup \{ v \} \cup \{ v \} = \lfloor a, v \rfloor.$$
Hence, apply {\bf (a)} again and obtain
$$\lfloor a, b \rfloor \cap \lfloor a, c \rfloor \cap \lfloor b, c \rfloor = \lfloor a, v \rfloor \cap \lfloor b, c \rfloor = \lfloor a, v \rfloor \cap (\lfloor b, v \rfloor \cup \lfloor v, c \rfloor)$$
$$= (\lfloor a, v \rfloor \cap \lfloor b, v \rfloor) \cup (\lfloor a, v \rfloor \cap \lfloor v, c \rfloor) = \{ v \}.$$
Since since the intersection $\lfloor a, b \rfloor \cap \lfloor a, c \rfloor \cap \lfloor b, c \rfloor$ is just a single point, the required follows.

\smallskip

{\bf (h)} Suppose on the contrary that there exists $x\in \lfloor c, a \rfloor\cap \lfloor c, b \rfloor$ such that $x\notin\lfloor c, z \rfloor$. By {\bf (e)}, we have $\lfloor c, b \rfloor \subseteq \lfloor c, z \rfloor \cup \lfloor z, b \rfloor$. Since $x\in \lfloor c, b \rfloor$ and $x\notin\lfloor c, z \rfloor$, it follows that $x\in \lfloor z, b \rfloor$. At the same time, {\bf (e)} implies that we have $\lfloor c, a \rfloor \subseteq \lfloor c, z \rfloor \cup \lfloor z, a \rfloor$. Since $x\in \lfloor c, a \rfloor$ and $x\notin\lfloor c, z \rfloor$, it follows that $x\in \lfloor z, a \rfloor$. Thus, it is shown that $x\in \lfloor z, b \rfloor \cap \lfloor z, a \rfloor$. However, $\lfloor z, b \rfloor \cap \lfloor z, a \rfloor=\{z\}$ by {\bf (a)}. Consequently, we have $x\in\{z\}$ --- a contradiction.
\end{proof}

Next, recall that in Section \ref{subs:lambda_trees} we introduced the notion of the median $Y(x, y, z)$ for any triple of points $\{x, y, z\} \subset \Gamma$. We extend this notion to $\overbar\Gamma$: for $a, b, c \in \overbar\Gamma$
\begin{itemize}
\item set $Y(a, b, c) = a$ if $a = b$ or $a = c$, and set $Y(a, b, c) = b$ if $b = c$,

\item if one of the points $a, b, c$ of $\overbar\Gamma$, say $c$, is an open cut $c = (e_0, e_1) \in \Cuts(\Gamma)$, where $e_i \in \Ends(\Gamma_i), \Gamma = \Gamma_0 \bigsqcup \Gamma_1$, such that $\lfloor a, c \rfloor \subseteq \Gamma_0$ and $\lfloor c, b \rfloor \subseteq \Gamma_1$, then set $Y(a, b, c) = c$,

\item set $Y(a, b, c) = \lfloor a, b \rfloor \cap \lfloor a, c \rfloor \cap \lfloor b, c \rfloor$ otherwise.
\end{itemize}

Observe that $Y(a, b, c)$ is correctly defined for any triple $a, b, c \in \overbar\Gamma$. Indeed, suppose $a, b$, and $c$ are all pairwise distinct and one of the intersections
$$\lfloor a, b \rfloor \cap \lfloor a, c \rfloor,\ \ \ \lfloor a, b \rfloor \cap \lfloor b, c \rfloor, \ \ \ \lfloor a, c \rfloor \cap \lfloor b, c \rfloor,$$
say $\lfloor a, c \rfloor \cap \lfloor b, c \rfloor$, is empty. In this case, by the assertion (c) of Lemma \ref{le:Y0} we have the case covered in the second part of the definition above. Now, if all three intersections $\lfloor a, b \rfloor \cap \lfloor a, c \rfloor,\ \lfloor a, b \rfloor \cap \lfloor b, c \rfloor,\ \lfloor a, c \rfloor \cap \lfloor b, c \rfloor$ are non-empty, by the proof of assertion (g) of Lemma \ref{le:Y0}, $\lfloor a, b \rfloor \cap \lfloor a, c \rfloor \cap \lfloor b, c \rfloor$ is a single point $v \in \Gamma$ and by the definition above we have $Y(a, b, c) = v$.

\smallskip

Now we define a metric on $\overbar\Gamma$. Consider the set $\Ed(\Gamma)$ which is countable by assumption. Let $f \colon \Ed(\Gamma) \to \R_+$ be a \emph{summable} positive real-valued function, that is
$$\sum_{e \in \Ed(\Gamma)} f(e) < +\infty.$$
We define the function 
\begin{equation}
\label{eq:d_f}
d_f \colon \overbar\Gamma \times \overbar\Gamma \to \R
\end{equation}
by setting
$$d_f(a,b) = \sum_{e \in \Ed \lfloor a, b \rfloor} f(e).$$
Then $d_f$ is clearly a metric on $\overbar\Gamma$ (the triangle inequality for $d_f$ follows from the assertion (f) of Lemma \ref{le:Y0}). 

\begin{lemma}
\label{lem:1-convergence}
Let $\{a_i\}$ and $\{b_i\}$ be two sequences of points in the metric space $(\overbar\Gamma, d_f)$. Then
\begin{itemize}
\item[(1)] the sequence $\{a_i\}$ is fundamental (i.e. Cauchy sequence) if and only if for every edge $e \in \Ed(\Gamma)$ there exists $N > 0$ such that $e \notin \Ed \lfloor a_j, a_k \rfloor$ whenever $j > N$ and $k > N$,

\item[(2)] the sequence $\{d_f(a_i, b_i)\}$ tends to zero if and only if for each $e \in \Ed(\Gamma)$, the set $\{i \in \N\, \mid\, e \in \Ed \lfloor a_i, b_i \rfloor\}$ is at most finite,

\item[(3)] the sequence $\{a_i\}$ converges to a point $t \in \overbar\Gamma$ if and only if for each $e \in \Ed(\Gamma)$ the set $\{i \in \N\, \mid\, e \in \Ed \lfloor a_i, t \rfloor\}$ is at most finite.
\end{itemize}
\end{lemma}
\begin{proof} 
(1) If $\{a_i\}$ is fundamental and $e \in \Ed(\Gamma)$, then there exists $N > 0$ such that $d_f(a_j, a_k) < f(e)$ whenever $j > N$ and $k > N$. It follows that $e \notin \Ed \lfloor a_j, a_k \rfloor$ whenever $j > N$ and $k > N$.

Conversely, assume that for every $e \in \Ed(\Gamma)$ there exists $N(e) > 0$ such that $e \notin \Ed \lfloor a_j, a_k \rfloor$ whenever $j > N(e)$ and $k > N(e)$. Observe that, since $f$ is summable, for any $\delta > 0$ there exists a finite set $E_f(\delta) \subset \Ed(\Gamma)$ of edges such that 
$$\sum_{e \in \Ed(\Gamma) \ssm E_f(\delta)} f(e) < \delta.$$
Let $N = \max\{N(e) \mid e \in E_f(\delta)\}$. Then $d_f(a_j, a_k) < \delta$ whenever $j > N$ and $k > N$. This means that $\{a_i\}$ is fundamental.

\smallskip

(2) If $\{d_f(a_i, b_i)\}$ tends to zero and $e \in \Ed(\Gamma)$, then there exists $N > 0$ such that $d_f(a_j, b_j) < f(e)$ whenever $j > N$. It follows that the set $\{i \in \N\, \mid\, e \in \Ed \lfloor a_i, b_i \rfloor\}$ consists of at most $N$ elements.

Conversely, assume that for every $e \in \Ed(\Gamma)$ there exists $N(e) > 0$ such that $e \notin \Ed \lfloor a_i, b_i \rfloor$ whenever $j > N(e)$. Let $E_f(\delta)$ be the finite set described in the proof of (1) and let $N = \max\{N(e) \mid e \in E_f(\delta)\}$. Then $d_f(a_j, b_j) < \delta$ whenever $j > N$. This means that $d_f(a_i, b_i) \to 0$ as $i \to \infty$, as required.

\smallskip

(3) follows from the previous assertion by setting $b_i = t$ for every $i$.  
\end{proof}

\begin{prop}
\label{pr:top-independent}
The topology of the metric space $(\overbar\Gamma, d_f)$ is independent of the choice of the summable function $f \colon \Ed(\Gamma) \to \R_+$. 
\end{prop}
\begin{proof}
It is well known (and easy to check) that the topology of a metrizable space is determined by its convergent sequences. By Lemma \ref{lem:1-convergence}(3), the set of convergent sequences in $(\overbar\Gamma, d_f)$ is independent of the choice of~$f$.
\end{proof}

For an edge $e \in \Ed(\Gamma)$ denote by $R_e$ the binary relation on the set of points of $\overbar\Gamma$ defined by 
$$(a,b) \in R_e \quad \Leftrightarrow \quad e \notin \Ed \lfloor a, b \rfloor.$$
It is easy to see that $R_e$ is an equivalence relation (transitivity follows from the assertion (f) of Lemma \ref{le:Y0}), which has precisely two equivalence classes (follows from the assertion (g) of Lemma \ref{le:Y0}).

\begin{prop}
\label{le:compact}
$\overbar\Gamma$ is compact in the topology induced by the metric $d_f$.
\end{prop}
\begin{proof} 
It is enough to show that every infinite sequence $\{x_i\}$ of points in $(\overbar\Gamma, d_f)$ has a convergent subsequence.

Let us first show that $\{x_i\}$ has a fundamental subsequence. Since $\Ed(\Gamma)$ is countable, $\{x_i\}$ has a subsequence $\{x_{i_j}\}$ such that for each $e \in \Ed(\Gamma)$ there exists $N(e) > 0$ such that $(x_{i_j}, x_{i_k}) \in R_e$ (which is equivalent to $e \notin \Ed \lfloor x_{i_j}, x_{i_k} \rfloor$) whenever $j > N(e)$ and $k > N(e)$. Then $\{x_{i_j}\}$ is fundamental by the assertion (1) of Lemma \ref{lem:1-convergence}.

Now, we show that every fundamental sequence $\{y_i\}$ in $(\overbar\Gamma, d_f)$ converges. Fix an arbitrary point $y \in \Gamma$ and consider the linear subtree
$$L = \bigcup_{k \in \N} \bigcap_{j > k} \lfloor y, y_j \rfloor.$$
There exists a unique point $t \in \overbar\Gamma$ such that $L = \lfloor y, t \rfloor$ (see Lemma \ref{le:all-ends}). We show that $\{y_i\}$ converges to~$t$. Assume that it does not. Then, by the assertion (3) of Lemma \ref{lem:1-convergence} there exists an edge $e$ and an infinite subsequence $\{y_{i_j}\}$ in $\{y_i\}$ such that $e \in \lfloor t, y_{i_j} \rfloor$ for all $j$. Since $\{y_i\}$ is fundamental, it then follows by the assertion (1) of Lemma \ref{lem:1-convergence} (and by the assertion (f) of Lemma \ref{le:Y0}) that $e \in \lfloor t, y_i \rfloor$ for all $i$ sufficiently large. 

In the case where $e \notin \lfloor y, t \rfloor$, we have $e \in \lfloor y, y_i \rfloor$ for all sufficiently large $i$ (we use the assertion (f) of Lemma \ref{le:Y0}), whence $e \in \lfloor y, t \rfloor$ (by definition of $L$ and $t$), a contradiction. 

In the case when $e \in \lfloor y, t \rfloor$ we have $e \notin \lfloor y, y_i \rfloor$ for all sufficiently large $i$ (we use the assertion (g) of Lemma \ref{le:Y0} and the fact that $e \in \lfloor t, y_i \rfloor$) whence $e \notin \lfloor y, t \rfloor$ (by definition of $L$ and $t$), a contradiction. 

This proves that $\{y_i\}$ converges to $t$ and the proposition is thus proved.
\end{proof}

\begin{remark}
It is obvious from the construction that $\Gamma$ is dense in $(\overbar\Gamma, d_f)$. Therefore, Proposition \ref{le:compact} implies that $\overbar\Gamma$ is the compactification of $\Gamma$ with respect to $d_f$.
\end{remark}

\subsection{Group action on $\overbar\Gamma$}
\label{subs:action}

Let $\Gamma$ be a countable $\Z^n$-tree and $\overbar\Gamma$ be the compactification of $\Gamma$ with respect to the metric $d_f$ constructed in Subsection \ref{subs:compact_general}.

Observe that an automorphism $\alpha \colon \Gamma \to \Gamma$ sends $\Gamma$-rays to $\Gamma$-rays, ends to ends, open cuts to open cuts, and therefore induces an automorphism $\overbar\alpha \colon \overbar\Gamma \to \overbar\Gamma$.

\begin{prop}
\label{prop:homeo}
Every automorphism of $\overbar\Gamma$ is a homeomorphism with respect to the metric $d_f$.
\end{prop}
\begin{proof} 
If $\alpha \colon \overbar\Gamma \to \overbar\Gamma$ is an automorphism, let $f_{\alpha} \colon Ed(\Gamma) \to \R_+$ be the function defined by 
$$f_{\alpha}(e) := f(\alpha^{-1}(e)).$$
Then $\alpha$ is an isometry between $(\overbar\Gamma, d_f)$ and $(\overbar\Gamma, d_{f_{\alpha}})$. Therefore, $\alpha$ is a homeomorphism because $d_f$ and $d_{f_{\alpha}}$ induce one and the same topology by Proposition \ref{pr:top-independent}.
\end{proof}

\begin{theorem}
\label{th:the-action}
If a group $G$ acts on $\Gamma$ by isometries, then the induced action of $G$ on $\overbar\Gamma$ is continuous with respect to the topology generated by the metric $d_f$. Moreover, if $G$ is non-abelian and acts on $\Gamma$ freely, without inversions, and so that $\Gamma$ has no proper $G$-invariant subtrees, then the orbit of every point in $\overbar\Gamma$ is infinite.
\end{theorem}
\begin{proof} The first part of the statement follows from Proposition \ref{prop:homeo}.

Now suppose there exists $v \in \overbar\Gamma$ such that $\{G \cdot v\}$ is finite. It follows that $|G : Stab_G(v)| < \infty$.

If $v \in \Gamma$, then $\Gamma$ must be spanned by $\{G \cdot v\}$ because otherwise $\Gamma$ contains a proper $G$-invariant subtree. But then, since the action is isometric, it follows that either there is a fixed point, or an inversion - a contradiction in either case. Thus we have $v \in \Ends(\Gamma) \cup Cuts(\Gamma)$. 

Suppose $v \in \Ends(\Gamma)$. By \cite[Lemma 3.1.9]{Chiswell:2001}, if $v$ is fixed by $g \in G$, then $v$ is an end of $Axis(g)$, which is a maximal linear $g$-invariant subtree of $\Gamma$. If $v$ is fixed by another element $h \in G$, then $v$ is an end of $Axis(h)$ too, so $Axis(g) = Axis(h)$, from which it follows that $[g,h] = 1$. Thus, $Stab_G(v)$ is abelian and $G$ is virtually abelian. But $G$ is $\Z^n$-free, so commutation is transitive and it follows that $G$ is abelian -- a contradiction with our assumption.

The case when $v \in \Cuts(\Gamma)$ is considered similarly.
\end{proof}

\begin{theorem}
\label{th:minimal}
Assume that a group $G$ acts on $\Gamma$ by isometries and $\Gamma$ has no proper $G$-invariant subtrees. Then the induced action of $G$ on the set of ends $\Ends(\Gamma)$ either has a global fixed point, or is topologically minimal (with respect to the topology induced by the metric $d_f$).
\end{theorem}
\begin{proof} 
Assume on the contrary that the action of $G$ on $\Ends(\Gamma)$ neither is topologically minimal, nor has a global fixed point. Then $\Ends(\Gamma)$ has a proper closed (in $\Ends(\Gamma)$) $G$-invariant subset $K$ containing at least two points. Set 
$$\Gamma_K = \bigcup_{a,b \in K} \lfloor a, b \rfloor.$$
We are going to obtain a contradiction by showing that $\Gamma_K$ is a proper $G$-invariant subtree of $\Gamma$.

First, observe that $\Gamma_K \neq \varnothing$. Indeed, since $K$ contains at least two points $a \neq b$, we have $\Gamma_K \supset \lfloor a, b \rfloor \neq \varnothing$. 

Next, observe that $\Gamma_K$ is a subtree in $\Gamma$. Indeed, if $x, y \in \Gamma_K$, then $x \in \lfloor a, b \rfloor$ and $y \in \lfloor c, d \rfloor$ for some $a, b, c, d \in K$. Interchanging $a$ with $b$ if necessary, we can assume that $b \neq c$. Then 
$$\lfloor a, b \rfloor \cap \lfloor b, c \rfloor \neq \varnothing \neq \lfloor b, c \rfloor \cap \lfloor c, d \rfloor,$$ 
so that the union 
$$N = \lfloor a, b \rfloor \cup \lfloor b, c \rfloor \cup \lfloor c, d \rfloor$$ 
is a subtree by \cite[Lemma 2.13]{Alperin_Bass:1987}. Therefore, we have $[x,y] \subset N \subset \Gamma_K$. It is thus shown that $[x,y] \subset \Gamma_K$ whenever $x,y \in \Gamma_K$, which means that $\Gamma_K$ is convex and hence (being non-empty) is a subtree of $\Gamma$.

Since $K$ is $G$-invariant, so is $\Gamma_K$. It remains to show that $\Gamma_K \neq \Gamma$.

Since $K$ is a proper closed subset in $\Ends(\Gamma)$, we can choose $p \in \Ends(\Gamma)$ and $\varepsilon > 0$ such that $\dist_{d_f}(p, K) > \varepsilon$. We show that there exists an edge $e$ in $\Gamma$ such that $\lfloor k, p \rfloor$ contains $e$ for each $k \in K$. Since $f$ is summable, there exists a finite set $E_f(\varepsilon) \subset \Ed(\Gamma)$ of edges such that 
$$\sum_{e \in \Ed(\Gamma) \ssm E_f(\varepsilon)} f(e) < \varepsilon.$$
We denote by $M$ the set of all the vertices of the edges in $E_f(\varepsilon)$. Observe that the intersection
$$L = \bigcap_{v \in M} [v, p \rangle$$
of $\Gamma$-rays is a $\Gamma$-ray because $M$ is finite. Let $e$ be an edge in $L$. If $k \in K$, then $\lfloor k, p \rfloor$ contains an edge of $E_f(\varepsilon)$ because $\dist_{d_f}(p, K) > \varepsilon$ by construction. Therefore, $\lfloor k, p \rfloor$ contains a point $v \in M$, whence we have
$$e \in \Ed(L) \subset \Ed[v,p) \subset \Ed \lfloor k, p \rfloor.$$
Now, we can show that $\lfloor x, p \rfloor$ contains $e$ for each $x\in \Gamma_K$. Indeed, if $x \in \Gamma_K$, then by the definition of $\Gamma_K$ there exist $a$ and $b$ in $K$ such that $x \in \lfloor a, b \rfloor$. We see that $\lfloor a, b \rfloor$ does not contain $e$ by the assertion (g) of Lemma \ref{le:Y0} because $\lfloor a, p \rfloor$ and $\lfloor b, p \rfloor$ both contain $e$ as proved above. Consequently, $\lfloor x, a \rfloor$ does not contain $e$ also, being a subset in $\lfloor a, b \rfloor$. Therefore, $\lfloor x, a \rfloor$ does not contain $e$ while $\lfloor a, p 
\rfloor$ contains $e$. Then $\lfloor x, p \rfloor$ contains $e$ because we have $\lfloor a, p \rfloor \subset \lfloor x, a \rfloor \cup \lfloor x, p \rfloor $ by the assertion (e) of Lemma \ref{le:Y0}.

Thus, we have shown that $\dist_{d_f}(\Gamma_K,p) \geqslant f(e) > 0$. Consequently, $\Gamma_K \neq \Gamma$. This completes the proof. 
\end{proof}

\subsection{Convergence in $\overbar\Gamma$}
\label{subs:ends}

In the space $\overbar\Gamma$ under investigation, there is a strong relationship between the convergence of points and the weak convergence of measures. For example, if $\Gamma$ is an ordinary $\Z^1$-tree ($n = 1$), $v_0$ a vertex in $\Gamma$, $z$ is an end in $\Ends(\Gamma)$, $\nu$ is a continuous Borel probability measure on $\overbar\Gamma$, and $\{\alpha_i\}$ is a sequence of automorphisms of $\Gamma$, then the following conditions are equivalent:
\begin{itemize}
\item[\rm(i)] the sequence $\{\alpha_i \cdot v_0\}$ converges to $z$,

\item[\rm(ii)] the sequence $\{\alpha_i \cdot \nu\}$ weakly converges to the Dirac measure $\delta_z$.
\end{itemize}
If $n > 1$, then (i) and (ii) are no longer equivalent. However, in the general case, the relationship mentioned above shows up when we impose appropriate restrictions on $z$ and/or $\{\alpha_i\}$. In fact, our proof of the main theorem is based on this relationship (see Corollary \ref{cor:mu-prox-2} and Proposition \ref {prop:measure-point}). In this subsection, we do some preliminary work that will allow us to prove Corollary \ref{cor:mu-prox-2} and Proposition \ref {prop:measure-point}.

\smallskip

Recall from Subsection \ref{subs:Z^n_trees} that $\Xi_{n-1}(\Gamma)$ is the set of all maximal $\Z^{n-1}$-subtrees of $\Gamma$.

\begin{defn}
\label{defn:nnorm}
Define the function $\ntn \colon \overbar\Gamma \times \overbar\Gamma \to [0, +\infty]$ as follows. Set $\ntn(a, a) = 0$ for each $a \in \overbar\Gamma$ and 
\begin{equation*}
\ntn(a,b) = \card\{S \in \Xi_{n-1}(\Gamma)\, \mid\, S \cap \lfloor a,b\rfloor \neq \varnothing\},
\end{equation*}
for distinct $a, b \in \overbar\Gamma$.
\end{defn}

That is, $\ntn(a,b)$ is the number of distinct maximal $\Z^{n-1}$-subtrees of $\Gamma$ intersecting with $\lfloor a, b\rfloor$. Observe that if $a$ and $b$ are distinct ends of $\Gamma$, then $\ntn(a,b)$ may be equal to $+\infty$.

\begin{lemma}
\label{lem:ntn}
The function 
$$\ntn \colon \overbar\Gamma \times \overbar\Gamma \to [0, +\infty]$$
is an \emph{extended metric} on $\overbar\Gamma$, that is, $\ntn$ satisfies the standard axioms of metric, but can attain the value $+\infty$. This extended metric is $G$-invariant. The restriction of this extended metric to $\Gamma$ is a metric.
\end{lemma}
\begin{proof}
Assertion (e) of Lemma \ref{le:Y0} implies that $\ntn$ satisfies the triangle inequality. The other required properties of $\ntn$ immediately follow from its definition.
\end{proof}

Recall that $\base$ is a designated point in $\Gamma$. By abuse of notation, for $p \in \overbar\Gamma$, we set $\ntn(p) = \ntn(\base, p)$, and for $g \in G$, we set
$$\ntn(g) = \ntn(g \cdot \base) = \ntn(\base, g \cdot \base).$$

\begin{lemma}
\label{lem:vrtx-convrg}  
Let $\{g_i\}$ be a sequence in $G$, let $v_0$ be a point in $\Gamma$, and let $z$ be a point in $\overbar\Gamma$. Assume that the sequence $\{g_i \cdot v_0\}$ converges to $z$ and that the sequence $\{\ntn(g_i)\}$ tends to infinity as $i \to \infty$. Then all the sequences $\{g_i \cdot v\}$ with $v \in \Gamma$ converge to $z$.
\end{lemma}
\begin{proof} 
Since the action of $G$ on $\overbar\Gamma$ is isometric with respect to the extended metric $\ntn$, the assumption that $\{\ntn(g_i)\} = \{\ntn(g_i \cdot \base, \base)\}$ tends to infinity implies that $\{\ntn(g_i\cdot v)\}$ tends to infinity for each $v \in \Gamma$. Indeed, we observe that $\ntn(g_i \cdot \base, g_i \cdot v) = \ntn(\base, v) = \ntn(v)$ is finite, while the triangle inequality (see Lemma \ref{lem:ntn}) implies that 
$$\ntn(g_i \cdot v) = \ntn(g_i\cdot v, \base) \geqslant \ntn(g_i \cdot \base, \base) - \ntn(g_i \cdot \base, g_i \cdot v) =\ntn(g_i) - \ntn(v).$$
In particular, $\{\ntn(g_i \cdot v_0)\}$ tends to infinity.

Now, we take $v \in \Gamma$, put $k := \ntn(v_0, v)$, and observe that for each $i \in \N$ the geodesic segment $[g_i \cdot v_0, g_i \cdot v]$ is contained in the $\ntn$-ball of $\ntn$-radius $k$ centered at $g_i \cdot v_0$. Since  $\{\ntn(g_i \cdot v_0)\}$ tends to infinity, this implies that the $\ntn$-distance between $\base$ and $[g_i \cdot v_0,  g_i \cdot v]$ tends to infinity. Therefore, for each $e \in \Ed(\Gamma)$ the set $\{i \in \N\, \mid\, e \in \Ed[g_i \cdot v_0, g_i \cdot v]\}$ is at most finite. The result then follows by the assertion (2) of Lemma \ref{lem:1-convergence}.
\end{proof}

\begin{lemma}
\label{le:nnorm-convrg}
Let $\{a_i\}$ and $\{b_i\}$ be two sequences of points in $\overbar\Gamma$. Suppose that $\ntn(Y(\base,a_i,b_i)) \to \infty$ as $i \to \infty$. Then $d_{\overbar\Gamma}(a_i,b_i) \to 0$ as $i \to \infty$. In particular, $\{a_i\}$ converges to a point $\omega \in \overbar\Gamma$ if and only if $\{b_i\}$ converges to $\omega$.
\end{lemma}
\begin{proof} 
Observe that if $x \in \overbar\Gamma$ and $y \in \overbar\Gamma$ belongs to $\lfloor \base, x \rfloor$ (as a point or an open cut), then $\ntn(y) \leqslant \ntn(z)$ for every $z \in \lfloor y, x \rfloor$ (follows from definition of $\ntn$). 

Next, from the assertion (h) of Lemma \ref{le:Y0} it follows that for any $a, b \in \overbar \Gamma$ such that $\lfloor a, b \rfloor \neq \varnothing$ we have 
$$\lfloor \base, a \rfloor \cap \lfloor \base, b \rfloor \subset \lfloor \base, z \rfloor$$
for any $z \in \lfloor a, b \rfloor$. Hence, it follows that
$$\min_{z \in \lfloor a, b \rfloor} \ntn(z) = \ntn(Y(\base, a, b)).$$
Therefore, we have $\min_{z \in \lfloor a, b \rfloor} \ntn(z) = \ntn(Y(\base, a_i, b_i))$ for all $i \in \N$. Since we assume $\ntn(Y(\base, a_i, b_i)) \to \infty$ as $i \to \infty$, it follows that for each $v \in \Gamma$ the set $\{i \in \N\, \mid\, v \in [a_i, b_i]\}$ is at most finite (because we have $v \notin [a_i,b_i]$ whenever $\min_{z \in \lfloor a, b \rfloor} \ntn(z) > \ntn(v)$). Then the result follows from the assertion (2) of Lemma \ref{lem:1-convergence}.
\end{proof}

\begin{lemma}
\label{lem:open-product}
For every vertex $v$ of a $\Z^n$-tree $\Gamma$, the set 
$$I_v = \{(a,b) \in \Ends(\Gamma) \times \Ends(\Gamma) \mid v \in \lfloor a, b \rfloor \}$$
is open in $\Ends(\Gamma) \times \Ends(\Gamma)$ with respect to the product topology induced by the metric $d_f$.
\end{lemma}
\begin{proof} 
Let $(a,b) \in I_v$ and let $p \in \Ends(\Gamma)$ be such that $d_f(b, p) < d_f(v, b)$. Then $v \notin \lfloor p, b \rfloor$. However, we have $\lfloor a, b \rfloor \subseteq \lfloor a, p \rfloor \cup \lfloor p, b \rfloor$ by the assertion (e) of Lemma \ref{le:Y0}. This implies that $v \in \lfloor a, p \rfloor$, that is, $(a,p) \in I_v$.

By repeating the same argument we see that $(r, p) \in I_v$ whenever $d_f(b, p) < d_f(v,b)$ and $d_f(a,r) < d_f(a,v)$. This obviously implies the lemma.
\end{proof}

\begin{remark}
It can be shown that $I_v$ from Lemma \ref{lem:open-product} is also closed.
\end{remark}

\section{Proof of Theorem \ref{th:Poisson}}
\label{sec:poisson}

In this section, we prove Theorem \ref{th:Poisson} according to the scheme outlined in the introduction. For the rest of this section, we fix $n \in \N$ and a countable non-abelian $\Z^n$-free group $G$ acting minimally, freely, and without inversion on a $\Z^n$-tree $\Gamma$. Also, we let $(\overline\Gamma, d_f)$ denote the compact metric space constructed in Subection \ref{subs:compact_general} for $\Gamma$. Recall that $\Ends(\Gamma)$ denote the set of open ends of $\Gamma$. We also fix a non-degenerate probability measure $\mu$ on $G$.

\subsection{$\mu$-proximality of $\overbar\Gamma$ and $\Ends(\Gamma)$}
\label{subs:4.1}

\begin{prop}
\label{prop:where-stationary-measures-lives}
If $\nu$ is a $\mu$-stationary measure on $\overbar\Gamma$ then
\begin{itemize}
\item[\rm(i)] $\nu$ is continuous,

\item[\rm(ii)] $\nu(\Ends(\Gamma)) = 1$,

\item[\rm(iii)] $\nu(E) > 0$ for every non-empty open set $E \subset \overbar\Gamma$ with $E \cap \Ends(\Gamma) \neq \varnothing$.
\end{itemize}
\end{prop}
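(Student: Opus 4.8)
The plan is to derive all three statements from the Kaimanovich--Masur Lemma \ref{lem:KaiMas} and the minimality Lemma \ref{lem:minimal-stationary}, using the structural facts about $\overline\Gamma_G$ established in Section \ref{sec:compact}. Statement (i) is immediate: by Theorem \ref{th:the-action}(i) the $G$-orbit of every point of $\overline\Gamma_G$ is infinite, so the final clause of Lemma \ref{lem:KaiMas} gives at once that $\nu$ is continuous. I would record this first, since I use it repeatedly to discard countable sets.

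For (ii) I would show $\nu(\overline\Gamma_G \ssm \partial_n\Gamma_G)=0$. By the construction of $\overline\Gamma_G$ one has $\overline\Gamma_G\ssm\partial_n\Gamma_G=\bigcup_{T}\overline T$, the union being over all $\Z^{n-1}$-subtrees $T$ of $\Gamma_G$, of which there are only countably many (each level ${\cal T}_k$ is countable). Hence it suffices to prove $\nu(\overline T)=0$ for a single fixed $T$. The obstruction to applying Lemma \ref{lem:KaiMas} directly to $E=\overline T$ is that two \emph{adjacent} $\Z^{n-1}$-subtrees have closures meeting in exactly one point, so the dichotomy ``$g\cdot E=E$ or $g\cdot E\cap E=\varnothing$'' fails. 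To repair this I would set $\tilde E=\overline T\ssm D_T$, where $D_T\subset\partial_{n-1}T$ is the (countable) set of connection points at which subtrees adjacent to $T$ are glued to $T$. Since $g$ carries connection points of $T$ to connection points of $gT$, for every $g$ either $gT=T$ and $g\cdot\tilde E=\tilde E$, or $gT\neq T$ and $g\cdot\tilde E\cap\tilde E=\varnothing$ (adjacent subtrees now meet only in a removed point, non-adjacent ones in closures that are already disjoint). Thus $\tilde E$ satisfies the first hypothesis of Lemma \ref{lem:KaiMas}.

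The core step, and the main obstacle, is to produce the required infinite pairwise-disjoint family of translates, which amounts to showing that the $G$-orbit of $T$ is infinite. Passing to $\Delta=\Gamma_G/\!\sim_{\Z^{n-1}}$, the subtree $T$ becomes a vertex $v_T$, and I claim $G\cdot v_T$ is infinite. If it were finite, its convex hull would be a bounded $G$-invariant subtree of $\Delta$; since irreducibility provides $g\in G$ with $\ntn(g)>0$, the translates $g^k\cdot T$ occupy infinitely many distinct levels (by Remark \ref{rem:seminorm}), so $\Delta$ is infinite and this hull is proper, giving a proper $G$-invariant subtree of $\Gamma_G$ and contradicting minimality of the action. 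With $G\cdot v_T$ infinite, bipartiteness of the tree $\Delta$ lets me extract infinitely many pairwise non-adjacent translates $g_iT$, whose sets $g_i\cdot\tilde E=\widetilde{g_iT}$ are then pairwise disjoint. Lemma \ref{lem:KaiMas} yields $\nu(\tilde E)=0$, and since $D_T$ is countable, part (i) gives $\nu(D_T)=0$; hence $\nu(\overline T)=0$. Summing over the countably many $T$ proves $\nu(\partial_n\Gamma_G)=1$.

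Finally, (iii) would follow from (ii) together with minimality of the action on $\partial_n\Gamma_G$ (Theorem \ref{th:minimal}). By (ii) the restriction $\nu|_{\partial_n\Gamma_G}$ is a $\mu$-stationary probability measure on the minimal $G$-space $\partial_n\Gamma_G$. Given a non-empty open $E\subset\overline\Gamma_G$ with $E\cap\partial_n\Gamma_G\neq\varnothing$, the set $E\cap\partial_n\Gamma_G$ is non-empty and open in $\partial_n\Gamma_G$, so Lemma \ref{lem:minimal-stationary} applied to the space $\partial_n\Gamma_G$ gives $\nu(E)\geqslant\nu(E\cap\partial_n\Gamma_G)>0$.
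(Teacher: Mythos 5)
Your proof is correct and follows essentially the same route as the paper: (i) via Lemma \ref{lem:KaiMas} and the absence of finite orbits, (ii) by deleting the countable set of gluing points from the closures of the $\Z^{n-1}$-subtrees so that distinct translates become disjoint and then applying Lemma \ref{lem:KaiMas} subtree by subtree over the countable family $\Xi_{n-1}$, and (iii) from minimality of the action on $\partial_n\Gamma_G$ via Lemma \ref{lem:minimal-stationary}. The one place you add detail is the explicit verification that each $\Z^{n-1}$-subtree has infinite $G$-orbit (the hypothesis of Lemma \ref{lem:KaiMas} that the paper leaves implicit), which is welcome; just note that $\ntn(g)>0$ does not by itself give $\ntn(g^k)\to\infty$ (the $n$-th coordinate of $|g^k|$ stays bounded if the translation length of $g$ lies in $\Z^{n-1}$), so to see that $\Delta$ is infinite it is cleaner to invoke irreducibility to produce elements $g_d$ with $\ntn(g_d)>d$ for every $d$, exactly as the paper does in the proof of Proposition \ref{prop:mu-prox-1}.
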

\begin{proof} 
(i) Continuity of $\nu$ follows from Lemma \ref{lem:KaiMas} because the action of $G$ on $\overbar\Gamma$ has no finite orbits (see Theorem \ref{th:the-action}).

\smallskip

(ii) The set $\Gamma$ is countable by assumption. Therefore, $\Cuts(\Gamma)$ is countable. Then, since $\nu$ is continuous, we have $\nu(\Gamma) = 0$ and $\nu(\Cuts(\Gamma)) = 0$. Hence, $\nu(\Ends(\Gamma)) = 1$.

\smallskip

(iii) From Theorem \ref{th:minimal} it follows that either $G$ acts on $\Ends(\Gamma)$ minimally, or there is a global fixed point. If all elements of $G$ fix $e \in \Ends(\Gamma)$, then $G$ is abelian and we have a contradiction with the set of assumptions on $G$. Hence, $G$ acts on $\Ends(\Gamma)$ minimally and now (iii) follows from (ii) by Lemma \ref{lem:minimal-stationary}.
\end{proof}

Define $\mathfrak{S}$ to be the subset in $G^{\N_0}$ consisting of all sequences $\{g_i\}$ with the following three properties:
\begin{itemize}
\item[(i)] $\{g_i \cdot \base\}$ converges in $(\overbar\Gamma, d_f)$,

\item[(ii)] $\{g_i^{-1} \cdot \base\}$ converges in $(\overbar\Gamma, d_f)$,

\item[(iii)] $\{\ntn(g_i)\}$ tends to infinity as $i \to \infty$ (recall Definition \ref{defn:nnorm}).
\end{itemize}

\begin{prop}
\label{prop:mu-prox-1}
A.\,e.~path $\{\tau_i\}$ of the $\mu$-random walk contains a subsequence $\{\tau_{i_j}\} \in \mathfrak{S}$.
\end{prop}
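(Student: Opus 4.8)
The plan is to reduce the three defining properties of $\mathfrak{S}$ to the single statement that, for $P_\mu$-almost every path, $\limsup_{i\to\infty}\ntn(\tau_i)=\infty$. Suppose this holds and fix such a path; choose indices $i_1<i_2<\cdots$ with $\ntn(\tau_{i_j})\to\infty$. Since $\overline\Gamma_G$ is compact (Lemma \ref{le:compact2}), a diagonal extraction lets us pass to a subsequence (still denoted $\{i_j\}$) along which both $\tau_{i_j}\cdot\base\to p$ and $\tau_{i_j}^{-1}\cdot\base\to q$ for some $p,q\in\overline\Gamma_G$. Because $\ntn$ is a seminorm (Remark \ref{rem:seminorm}) we have $\ntn(\tau_{i_j}^{-1})=\ntn(\tau_{i_j})\to\infty$, so both orbit sequences consist of points whose $\ntn$-value tends to infinity. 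Now every point of $\Gamma_G$ is isolated in $\overline\Gamma_G$ (immediate from the construction; for $n=1$ see Subsection \ref{subs:compact_simple}, and the rescaling in Subsection \ref{subs:compact_general} preserves isolation), so a convergent sequence with limit in $\Gamma_G$ is eventually constant and hence has bounded $\ntn$. Therefore $p,q\in\overline\Gamma_G\ssm\Gamma_G$, which gives conditions (i) and (ii), while (iii) holds by the choice of $\{i_j\}$. Thus $\{\tau_{i_j}\}\in\mathfrak{S}$.

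It remains to establish the limsup claim, and this is where the work lies. The natural device is the projection $\Delta=\Gamma_G/\sim_{\Z^{n-1}}$ introduced in Subsection \ref{subs:compact_general}: it is a simplicial tree on which $G$ acts, with base vertex $o=v_{T_0}$, and by Remark \ref{rem:seminorm} we have $\ntn(\tau_i)=d_\Delta(o,\tau_i\cdot o)$. So the claim is exactly that the projected orbit $\{\tau_i\cdot o\}$ leaves every ball of $\Delta$ infinitely often, i.e. that the $\mu$-walk does not remain $d_\Delta$-bounded. (Note that the balls of $\Delta$ are infinite, since $\Delta$ is not locally finite, so boundedness is a genuine probabilistic possibility that must be excluded.)

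The first step here is to record that $G$ acts on $\Delta$ as a \emph{non-elementary} group: it has no global fixed point by Theorem \ref{th:the-action}, and it fixes no end of $\Delta$. For the latter, observe that the ends of $\Delta$ are precisely the points of $\partial_n\Gamma_G$ (an end of $\Delta$ corresponds to an infinite geodesic crossing infinitely many $\Z^{n-1}$-subtrees, i.e. to a maximal linear subtree of $\Z^n$-type); by Theorem \ref{th:minimal} the action on $\partial_n\Gamma_G$ is minimal, and by Proposition \ref{prop:where-stationary-measures-lives} it carries a continuous $\mu$-stationary measure (which exists by Assertion \ref{asser:stationary-measure-exists}), so $\partial_n\Gamma_G$ is uncountable and cannot reduce to a single $G$-fixed end. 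Having a non-elementary action of $G$ on the tree $\Delta$ together with a non-degenerate $\mu$, the projected walk $\{\tau_i\cdot o\}$ is transient on $\Delta$ and converges to an end, whence $d_\Delta(o,\tau_i\cdot o)\to\infty$ almost surely and in particular $\limsup_i\ntn(\tau_i)=\infty$.

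The main obstacle is precisely this escape statement for the walk on $\Delta$. I expect to obtain it from the general theory of random walks on groups acting on trees (in the spirit of \cite{CartwrightSoardi:1989}), combined with non-amenability of $G$ — a non-abelian $\Z^n$-free group contains a non-abelian free subgroup and is therefore non-amenable, so a non-degenerate walk is transient. Alternatively one can give a self-contained argument that avoids invoking non-amenability: since $\mu$ is non-degenerate, the semigroup generated by its support contains elements $g$ with $|g|\in\Z^n\ssm\Z^{n-1}$, which act on $\Delta$ as hyperbolic isometries, and irreducibility forces two such elements with distinct axes; a ping-pong together with a Borel--Cantelli estimate then shows that the walk almost surely makes infinitely many net advances along $\Delta$, so that $d_\Delta(o,\tau_i\cdot o)$ is unbounded. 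Either route requires only that the $\Delta$-action have unbounded orbits and no fixed end, both of which were secured above, so I would present whichever argument meshes most cleanly with the tools already developed.
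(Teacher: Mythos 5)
Your reduction of the statement to the single claim $\limsup_i \ntn(\tau_i)=\infty$ for $P_\mu$-a.e.\ path is correct and is essentially how the paper proceeds: compactness of $\overline\Gamma_G$, the identity $\ntn(g)=\ntn(g^{-1})$, and the fact that points of $\Gamma_G$ are isolated (so a convergent sequence with limit in $\Gamma_G$ is eventually constant and has bounded $\ntn$) give properties (i)--(iii) along a further subsequence exactly as you say. The problem is that the limsup claim itself --- which you correctly identify as ``where the work lies'' --- is never actually proved. You offer three routes and execute none of them. The non-amenability route is the most clearly insufficient: non-amenability of $G$ gives transience of the walk \emph{on $G$}, i.e.\ escape from finite subsets, whereas what is needed is escape from the sets $\{g : \ntn(g)\leqslant d\}$, which are infinite (they contain the infinite stabilizer of $T_0$); you flag this very issue yourself (``the balls of $\Delta$ are infinite'') and then do not resolve it. The appeal to ``general theory of random walks on groups acting on trees'' and the ``ping-pong together with a Borel--Cantelli estimate'' are left as expectations; in particular ``infinitely many net advances along $\Delta$'' does not by itself yield unboundedness of $d_\Delta(o,\tau_i\cdot o)$ without controlling the retreats, and the claim that elements with $|g|\in\Z^n\ssm\Z^{n-1}$ act hyperbolically on $\Delta$ is not automatic (such a $g$ can be conjugate to an element of a $\Z^{n-1}$-subtree stabilizer and hence elliptic on $\Delta$).

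The gap is all the more visible because the needed statement is much weaker than transience on $\Delta$ and admits a short elementary proof, which is what the paper does: fix $d\in\N$; by irreducibility there is a single element $g_{2d}\in G$ with $\ntn(g_{2d})>2d$; by non-degeneracy there is $k$ with $\mu^{\ast k}(g_{2d})>0$; by a Borel--Cantelli estimate (or ergodicity of the Bernoulli shift on the i.i.d.\ increments $\tau_{jk}^{-1}\tau_{jk+k}$) a.e.\ path has some $m$ with $\tau_{m+k}=\tau_m g_{2d}$; and then the seminorm inequality $\ntn(\tau_m^{-1}\tau_{m+k})\leqslant \ntn(\tau_m)+\ntn(\tau_{m+k})$ forces one of $\ntn(\tau_m),\ \ntn(\tau_{m+k})$ to exceed $d$. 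Intersecting over $d\in\N$ gives $\limsup_i\ntn(\tau_i)=\infty$ a.s.\ with no input about the geometry of the $\Delta$-action beyond the existence of elements of arbitrarily large $\ntn$. You should replace your sketched escape argument with something of this form (or fully carry out one of your proposed routes, verifying the hypotheses of whatever convergence theorem you invoke for the non-locally-finite tree $\Delta$).
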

\begin{proof} 
First, we prove that a.\,e.~path $\{\tau_i\}$ of the $\mu$-random walk contains a subsequence with the property (iii) from the definition of $\mathfrak{S}$. 

Let $d \in \N$. Since $G$ acts on $\Gamma$ minimally, it follows that there exists an element $g_{2d} \in G$ such that $\ntn(g_{2d}) > 2d$. Since $\mu$ is a non-degenerate measure, there exists $k \in \N$ such that $\mu^{*k}(g_{2d}) > 0$. It follows that for a.\,e.~path $\{\tau_i\}$ of the $\mu$-random walk there exists $m \in \N$ such that $\tau_{m+k} = \tau_m g_{2d}$ (to see this, use ergodicity of the Bernoully shift in the space of sequences of $\mu^{*k}$-equidistributed random variables $\{\tau_{jk}^{-1} \tau_{jk + k}\}_{j \in \N_0}$, or observe that the measure $P_\mu$ of the set of all paths of the $\mu$-random walk with $\tau_{jk + k} \neq \tau_{jk} g_{2d}$ for each $j \in \N$ equals $\lim_{s \to \infty}(1 - \delta)^s = 0$, where $\delta = \mu^{*k}(g_{2d}) > 0$). Then
$$\ntn(\tau_m^{-1} \tau_{m+k}) = \ntn(g_{2d}) > 2d.$$
Now, we apply the triangle inequality (see Lemma \ref{lem:ntn}) and deduce that
$$\max\{\ntn(\tau_{m+k}), \ntn(\tau_m)\} > d.$$
We have thus shown that for any $d \in \N$, the path $\{\tau_i\}$ a.\,s.~has an element $\tau_i$ such that $\ntn(\tau_i) > d$. It obviously follows that $\{\tau_i\}$ a.\,s.~has a subsequence $\{\tau'_i\}$ such that $\{\ntn(\tau'_i)\}$ tends to infinity (the property (iii)).

\smallskip

Since $(\overbar\Gamma,d_f)$ is compact, each infinite sequence in $G$ (and $\{\tau'_i\}$ in particular) contains a subsequence having both properties (i) and (ii).
\end{proof}

\begin{lemma}
\label{lem:mu-prox-2}
Let $\{g_i\}$ be a sequence from $\mathfrak{S}$. Let $\omega$ and $\check\omega$ be points from $\overbar\Gamma$ that the sequences $\{g_i \cdot \base\}$ and $\{g_i^{-1} \cdot \base\}$ converge to. Then, for each $p \in \overbar\Gamma \ssm \check\omega$, the sequence $\{g_i \cdot p\}$ converges to $\omega$.
\end{lemma}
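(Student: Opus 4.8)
The plan is to reduce the whole statement to two applications of Corollary~\ref{cor:nnorm-convrg}. That corollary guarantees that $\{g_i\cdot p\}$ converges to the same limit as $\{g_i\cdot\base\}$ — which is $\omega$ by hypothesis — as soon as we know that $\ntn\bigl(Y(\base, g_i\cdot p, g_i\cdot\base)\bigr)\to\infty$. So the entire proof comes down to controlling the branch point $c_i := Y(\base, g_i\cdot p, g_i\cdot\base)$ of the two geodesics $[\base, g_i\cdot p]$ and $[\base, g_i\cdot\base]$.

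First I would move the basepoint by applying the isometry $g_i^{-1}$. Writing $q_i := g_i^{-1}\cdot\base$, which converges to $\check\omega$ by hypothesis, the image $g_i^{-1}\cdot c_i$ is the branch point of $[q_i, p]$ and $[q_i, \base]$, that is, the median of $q_i, p, \base$; equivalently $g_i^{-1}\cdot c_i = Y(\base, p, q_i)$ lies on $[\base,p]$. Hence $c_i = g_i\cdot Y(\base, p, q_i)$ sits on the geodesic $[\base, g_i\cdot\base]$. Using additivity of $\ntn$ along geodesics (Lemma~\ref{lem:nnorm-add}) together with the $G$-invariance $\ntn(g\cdot a, g\cdot b)=\ntn(a,b)$ (recorded in the proof of Lemma~\ref{lem:vrtx-convrg}), I would obtain the key identity
\[
\ntn(c_i) = \ntn(g_i) - \ntn\bigl(\base,\, Y(\base, p, q_i)\bigr).
\]

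Since $\ntn(g_i)\to\infty$ by property~(iii) in the definition of $\mathfrak S$, it now suffices to show that the subtracted term $\ntn(\base, Y(\base, p, q_i))$ stays bounded, and this is the heart of the argument and the only place where the hypothesis $p\neq\check\omega$ enters. I would argue by contradiction: if $\ntn(\base, Y(\base, p, q_i))$ were unbounded, pass to a subsequence along which it tends to infinity; applying Corollary~\ref{cor:nnorm-convrg} to the constant sequence $p$ and the sequence $q_i$ — whose branch point is exactly $Y(\base, p, q_i)$ — would force $d_{\overline\Gamma_G}(p, q_i)\to 0$, i.e. $q_i\to p$. But $q_i\to\check\omega\neq p$, a contradiction. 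Thus $\ntn(\base, Y(\base, p, q_i))\leqslant M$ for some constant $M$, whence $\ntn(c_i)\geqslant\ntn(g_i)-M\to\infty$.

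Finally, feeding $\ntn(Y(\base, g_i\cdot p, g_i\cdot\base))=\ntn(c_i)\to\infty$ back into Corollary~\ref{cor:nnorm-convrg}, this time applied to the pair of sequences $\{g_i\cdot p\}$ and $\{g_i\cdot\base\}$, and using that $\{g_i\cdot\base\}\to\omega$, I conclude $\{g_i\cdot p\}\to\omega$, as desired. The main obstacle is the boundedness step; the pleasant point is that Corollary~\ref{cor:nnorm-convrg} does double duty, both bounding the error term $\ntn(\base, Y(\base, p, q_i))$ (via a contradiction with $q_i\to\check\omega$) and delivering the final convergence. The routine things to check along the way are the $G$-invariance of $\ntn$ as a two-point function and that applying $g_i^{-1}$ really sends $c_i$ to the median of $q_i, p, \base$; both hold because $G$ acts by isometries and permutes the $\Z^{n-1}$-subtrees counted in Definition~\ref{defn:nnorm}, so no case analysis on the type of $p$ is needed.
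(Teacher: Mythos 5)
Your proposal is correct and follows essentially the same route as the paper's own proof: your branch point $c_i$ is exactly the paper's $p_i = Y(\base, g_i\cdot\base, g_i\cdot p)$, the identity $\ntn(c_i)=\ntn(g_i)-\ntn(\base, Y(\base,p,g_i^{-1}\cdot\base))$ is the paper's additivity computation, and the boundedness of the subtracted term is established by the same contradiction via Corollary~\ref{cor:nnorm-convrg} against $g_i^{-1}\cdot\base\to\check\omega\neq p$. No gaps.
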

\begin{proof} In order to prove the lemma, we will show that for every $p \in \overbar\Gamma \ssm \check\omega$ we have
\begin{equation}
\label{eq:234}
\ntn(p_i) \xrightarrow[i \to \infty]{} \infty,
\end{equation}
where $p_i = Y(\base, g_i \cdot \base, g_i \cdot p)$, and then apply Lemma \ref{le:nnorm-convrg}.

\smallskip

Observe that for each $i$ we have $g_i^{-1} \cdot p_i = Y(\base, g_i^{-1} \cdot \base, p)$ because
$$g_i^{-1} \cdot p_i = g_i^{-1} \cdot Y(\base, g_i \cdot \base, g_i \cdot p) = Y(g_i^{-1} \cdot \base, \base, p) = Y(\base, g_i^{-1} \cdot \base, p).$$

Let us prove that the sequence $\{\ntn(Y(\base, g_i^{-1} \cdot \base, p))\}$ is bounded. Assume that it is not. Then it has a subsequence $\{\ntn(Y(\base, g_{i_j}^{-1} \cdot \base, p))\}$ which tends to infinity. Then, by Lemma \ref{le:nnorm-convrg}, $\{g_{i_j}^{-1} \cdot \base\}$ converges to $p$, which contradicts the assumption that $g_i^{-1} \cdot \base \to \check\omega \neq p$. Thus, $\{\ntn(Y(\base, g_i^{-1} \cdot \base, p))\}$ is bounded, that is, there exists $N \in \N$ such that
$$\ntn(Y(\base, g_i^{-1} \cdot \base, p)) < N \quad \text{for each} \quad i \in \N.$$
Therefore, for each $i \in \N$ we have
$$\ntn(g_i \cdot \base, p_i) = \ntn(\base, g_i^{-1} \cdot p_i) \stackrel{\rm def}{=} \ntn(g_i^{-1} \cdot p_i) = \ntn(Y(\base, g_i^{-1} \cdot \base, p)) < N.$$
Now, $\{g_i\} \in \mathfrak{S}$, so 
$$\ntn(\base, g_i \cdot \base) \stackrel{\rm def}{=} \ntn(g_i \cdot \base) \stackrel{\rm def}{=} \ntn(g_i)$$
tends to $\infty$ (the property (iii)), while we have 
$$\ntn(\base, p_i) + \ntn(p_i, g_i \cdot \base) \ge \ntn(\base, g_i \cdot \base)$$
by the triangle inequality (see Lemma~\ref{lem:ntn}), whence it follows that $\ntn(p_i) \to \infty$. Finally, by Lemma \ref{le:nnorm-convrg}, this implies that $\{g_i \cdot p\}$ converges to $\omega$ since $\{g_i \cdot \base\}$ does.
\end{proof}

Lemma \ref{lem:mu-prox-2} obviously implies the following corollary.

\begin{corollary}
\label{cor:mu-prox-2}
Let $\{g_i\}$ be a sequence from $\mathfrak{S}$ with $\{g_i \cdot \base\}$ converging to a point $\omega \in \overbar\Gamma$. Then, for any continuous Borel probability measure $\lambda$ on $\overbar\Gamma$, the sequence $\{g_i \cdot \lambda\}$ converges to the Dirac measure $\delta_\omega$.
\end{corollary}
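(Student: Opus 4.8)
The plan is to deduce the weak convergence of measures from the pointwise convergence of the homeomorphisms $g_i$ supplied by Lemma \ref{lem:mu-prox-2}, together with the bounded convergence theorem. Recall that the asserted convergence $g_i \cdot \lambda \to \delta_\omega$ means precisely that $\int_{\overline\Gamma_G} f \, d(g_i \cdot \lambda) \to f(\omega)$ for every bounded continuous function $f : \overline\Gamma_G \to \R$. By the definition of the induced action on measures we have $(g_i \cdot \lambda)(E) = \lambda(g_i^{-1} \cdot E)$, so the change-of-variables formula gives $\int f \, d(g_i \cdot \lambda) = \int f(g_i \cdot p) \, d\lambda(p)$. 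Thus it suffices to show that $\int f(g_i \cdot p)\,d\lambda(p) \to f(\omega)$.

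First I would invoke Lemma \ref{lem:mu-prox-2}: since $\{g_i\} \in \mathfrak{S}$ with $g_i \cdot \base \to \omega$ and $g_i^{-1} \cdot \base \to \check\omega$, for every $p \in \overline\Gamma_G \ssm \{\check\omega\}$ the sequence $\{g_i \cdot p\}$ converges to $\omega$. Because $f$ is continuous, this yields $f(g_i \cdot p) \to f(\omega)$ for every such $p$. Now I use that $\lambda$ is continuous (non-atomic), so $\lambda(\{\check\omega\}) = 0$ and hence $f(g_i \cdot p) \to f(\omega)$ for $\lambda$-almost every $p$. Since $\overline\Gamma_G$ is compact, $f$ is bounded, so the constant $\sup|f|$ is an integrable dominating function for the sequence of maps $p \mapsto f(g_i \cdot p)$. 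The bounded (dominated) convergence theorem then gives $\int f(g_i \cdot p)\,d\lambda(p) \to \int f(\omega)\,d\lambda(p) = f(\omega)$, where the last equality uses that $\lambda$ is a probability measure. This is exactly the desired weak convergence.

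There is essentially no serious obstacle here: the statement is a soft consequence of Lemma \ref{lem:mu-prox-2}, as the paper itself anticipates. The only point requiring genuine care is the appeal to the continuity (atomlessness) of $\lambda$, which is precisely what licenses discarding the single exceptional point $\check\omega$ at which pointwise convergence to $\omega$ may fail; without this hypothesis the conclusion would be false, since a $\lambda$ with an atom at $\check\omega$ could leak mass away from $\omega$. The compactness of $\overline\Gamma_G$ then secures the integrable domination needed to pass the limit through the integral.
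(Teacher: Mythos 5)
Your proof is correct and takes essentially the same route as the paper, which derives the corollary directly from Lemma \ref{lem:mu-prox-2} (and leaves the deduction as immediate): pointwise convergence $g_i \cdot p \to \omega$ off the single point $\check\omega$, which is $\lambda$-null by atomlessness, combined with dominated convergence for bounded continuous test functions. Your remark that membership in $\mathfrak{S}$ guarantees the convergence of $\{g_i^{-1} \cdot \base\}$ needed to apply Lemma \ref{lem:mu-prox-2} is exactly the right point to check, and it holds by property (ii) of the definition of $\mathfrak{S}$.
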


\begin{corollary}
\label{cor:each-stat-is-muboundary}
If $\nu$ is a $\mu$-stationary measure on $\overbar\Gamma$, then $(\overbar\Gamma, \nu)$ is a $\mu$-boundary of $(G, \mu)$ in the sense of Furstenberg. In other words, the $G$-space $\overbar\Gamma$ is mean-proximal.
\end{corollary}
\begin{proof} 
Let $\{\tau_i\}$ be a path of the $\mu$-random walk. Then by Theorem~\ref{thm:alw-conv}, the sequence $\{\tau_i \cdot \nu\}$ a.\,s.~converges to some limit. By Proposition~\ref{prop:mu-prox-1}, $\{\tau_i\}$ a.\,s.~contains a subsequence $\{\tau_{i_j}\} \in \mathfrak{S}$. By Corollary~\ref{cor:mu-prox-2}, the subsequence $\{\tau_{i_j} \cdot \nu\}$ converges to a Dirac measure (since $\nu$ is continuous by Proposition \ref{prop:where-stationary-measures-lives}). Therefore, the limit of $\{\tau_i \cdot \nu\}$ is a.\,s.~a Dirac measure. This means by definition that $(\overbar\Gamma, \nu)$ is a $\mu$-boundary of $(G,\mu)$ in the sense of Furstenberg. Since $\mu$ is an arbitrary non-degenerate measure, it follows by Theorem~\ref{thm-def:mean-proximal} that the $G$-space $\overbar\Gamma$ is mean-proximal.
\end{proof}

Now, we can draw the main result of this subsection from the results above.

\begin{prop}
\label{prop:compact_proximal}
There exists a unique $\mu$-stationary measure $\nu_\mu$ on $\overbar\Gamma$. This measure is continuous and concentrated on~$\Ends(\Gamma)$. The measure space $(\overbar\Gamma, \nu_\mu)$ is a $\mu$-boundary of $G$ in the sense of Furstenberg.
\end{prop}
\begin{proof}  
Observe that uniqueness of a $\mu$-stationary measure $\nu_\mu$ follows from Corollary~\ref{cor:each-stat-is-muboundary} by Theorem~\ref{thm-def:mean-proximal} (the last one is applicable, because $\overbar\Gamma$ is compact and metrizable). The rest follows from Proposition~\ref{prop:where-stationary-measures-lives} and Corollary~\ref{cor:each-stat-is-muboundary}.
\end{proof}

Proposition~\ref{prop:compact_proximal} yields the following corollary (see Assertion~\ref{asser:bnd} and Corollary~\ref{rem:quotients} concerning relationship between $\mu$-boundaries in the sense of Furstenberg and in the sense of Kaimanovich).

\begin{corollary}
\label{prop:ends_proximal}
There exists a unique $\mu$-stationary measure $\nu_\mu$ on $\Ends(\Gamma)$. This measure is continuous. The measure space $(\Ends(\Gamma), \nu_\mu)$ is a $\mu$-boundary of $G$ in the sense of Kaimanovich.
\end{corollary}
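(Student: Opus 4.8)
The plan is to deduce the statement directly from Proposition~\ref{prop:compact_proximal} by restricting the unique stationary measure $\nu_\mu$ on $\overline\Gamma_G$ to the $G$-invariant subset $\partial_n\Gamma_G$, using Lemma~\ref{lem:subspace-boundary} as the transfer tool. All the substantive inputs are already in place: Proposition~\ref{prop:compact_proximal} provides a unique continuous $\mu$-stationary measure $\nu_\mu$ on the compact metric $G$-space $\overline\Gamma_G$ satisfying $\nu_\mu(\partial_n\Gamma_G)=1$, and asserts that $(\overline\Gamma_G,\nu_\mu)$ is a $\mu$-boundary.

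For existence, I would first observe that $\partial_n\Gamma_G$ is $G$-invariant, since the induced $G$-action on $\overline\Gamma_G$ sends ends of $\Z^n$-type to ends of $\Z^n$-type, and that it is Borel-measurable in $\overline\Gamma_G$ by Remark~\ref{ref:not-min}(iii). Together with $\nu_\mu(\partial_n\Gamma_G)=1$, these are precisely the hypotheses of Lemma~\ref{lem:subspace-boundary} applied with $M=\overline\Gamma_G$, $M'=\partial_n\Gamma_G$, and $\nu=\nu_\mu$. Hence the restriction $\nu_\mu'=\nu_\mu|_{\partial_n\Gamma_G}$ is $\mu$-stationary and makes $(\partial_n\Gamma_G,\nu_\mu')$ a $\mu$-boundary. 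Continuity of $\nu_\mu'$ is inherited from $\nu_\mu$, since $\nu_\mu'(\{x\})=\nu_\mu(\{x\})=0$ for every $x\in\partial_n\Gamma_G$. A point worth checking is that the weak convergence of $\{\tau_i\cdot\nu_\mu'\}$ to a Dirac measure takes place in the subspace metric induced by $d_{\overline\Gamma_G}$; this is exactly what the proof of Lemma~\ref{lem:subspace-boundary} supplies, via the characterization that a sequence of measures converges to $\delta_x$ iff every open neighbourhood of $x$ eventually carries mass arbitrarily close to $1$.

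For uniqueness the plan is a push-forward argument. Let $\iota\colon\partial_n\Gamma_G\hookrightarrow\overline\Gamma_G$ be the ($G$-equivariant, Borel) inclusion, and let $\rho$ be any $\mu$-stationary probability measure on $\partial_n\Gamma_G$. Then $\iota_*\rho$ is a Borel probability measure on $\overline\Gamma_G$, and equivariance gives $g\cdot\iota_*\rho=\iota_*(g\cdot\rho)$ for every $g\in G$, so that $\mu\ast\iota_*\rho=\iota_*(\mu\ast\rho)=\iota_*\rho$; that is, $\iota_*\rho$ is $\mu$-stationary on $\overline\Gamma_G$. By the uniqueness clause of Proposition~\ref{prop:compact_proximal} we then have $\iota_*\rho=\nu_\mu$, and since $\partial_n\Gamma_G$ is Borel in $\overline\Gamma_G$ this forces $\rho=\nu_\mu|_{\partial_n\Gamma_G}=\nu_\mu'$, proving uniqueness.

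I expect the only genuinely delicate point to be this uniqueness step: confirming that push-forward along the inclusion carries $\mu$-stationary measures to $\mu$-stationary measures, and that the Borel structure on $\partial_n\Gamma_G$ as a subspace of $\overline\Gamma_G$ is the one with respect to which $\rho$ is stationary. Everything else reduces to a direct invocation of the already-established Proposition~\ref{prop:compact_proximal} and Lemma~\ref{lem:subspace-boundary}.
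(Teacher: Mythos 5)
Your proposal follows the paper's own route: the paper likewise obtains the boundary property by combining Proposition~\ref{prop:compact_proximal} (unique continuous stationary measure on $\overline\Gamma_G$ concentrated on $\partial_n\Gamma_G$) with Lemma~\ref{lem:subspace-boundary} applied to the $G$-invariant measurable subset $\partial_n\Gamma_G$. The only difference is that you spell out the uniqueness on $\partial_n\Gamma_G$ via the push-forward along the inclusion, a step the paper leaves implicit; your argument for it is correct, so this is a harmless (indeed welcome) elaboration rather than a divergence.
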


\subsection{Stability of paths in $\Z^n$-free groups}
\label{subs:4.2}

\begin{prop}
\label{prop:measure-point}
Let $\{g_i\}$ be a sequence in~$G$, and let $z$ be a point in~$\overbar\Gamma$. Assume that the sequence $\{g_i \cdot \nu_\mu\}$ converges to the Dirac measure $\delta_z$. Then all the sequences $\{g_i \cdot v\}$ with $v \in \Gamma$ converge to~$z$.
\end{prop}
\begin{proof} 
Let $v \in \Gamma$. Since $\Gamma$ is countable and admits a minimal action of a group, it follows that there exists a pair of ends $\omega_1, \omega_2 \in \Ends(\Gamma)$ such that $v \in \langle \omega_1, \omega_2 \rangle$ (indeed, otherwise $\Gamma$ has closed ends and the subset of vertices that are not closed ends forms a proper invariant subtree, which contradicts the minimality assumption).

Lemma~\ref{lem:open-product} implies that there exist non-empty open subsets $A$ and $B$ in $\Ends(\Gamma)$ containing respectively $\omega_1$ and $\omega_2$ such that $v \in \langle a, b \rangle$ whenever $a \in A$, $b \in B$. By assertion~(iii) of Proposition~\ref{prop:where-stationary-measures-lives} we have $\nu_\mu(A) > 0$ and $\nu_\mu(B) > 0$. Therefore, since $\{g_i \cdot \nu_\mu\}$ converges (weakly) to~$\delta_z$, it follows that for every $r > 0$ there exists $N > 0$ such that $d_f(z, g_i \cdot a_i) < r$ and $d_f(z, g_i \cdot b_i) < r$ for some $a_i \in A$ and $b_i \in B$ whenever $i > N$.

Note that the balls of the metric~$d_f$ are {\em geodesically convex\/}. In particular, the conditions 
$$g_i \cdot v \in \langle g_i \cdot a_i, g_i \cdot b_i \rangle, \quad d_f(z, g_i \cdot a_i) < r, \quad \text{and} \quad d_f(z, g_i \cdot b_i) < r$$ 
imply that $d_f(z, g_i \cdot v) < r$. Indeed, we have 
$$g_i \cdot v \in \langle g_i \cdot a_i, g_i \cdot b_i \rangle = \lfloor g_i \cdot a_i, g_i \cdot b_i \rfloor \subset \lfloor z, g_i \cdot a_i \rfloor \cup \lfloor z, g_i \cdot b_i \rfloor$$ 
by Lemma~\ref{le:Y0}, so that 
$$d_f(z, g_i \cdot v) \leqslant d_f(z, g_i \cdot a_i) < r \quad \text{if} \quad g_i \cdot v \in \lfloor z, g_i \cdot a_i \rfloor,$$ 
$$d_f(z, g_i \cdot v) \leqslant d_f(z, g_i \cdot b_i) < r \quad \text{if} \quad g_i \cdot v \in \lfloor z, g_i \cdot b_i \rfloor.$$

It is thus shown that for any $r > 0$ there exists $N$ such that $d_f(z, g_i \cdot v) < r$ whenever $i > N$, that is, the sequence $\{g_i \cdot v\}$ converges to $z$.
\end{proof}

\begin{corollary}
\label{cor:path_stabilization}
For almost every path $\tau = \{\tau_i\}$ of the $\mu$-random walk, all the sequences $\{\tau_i \cdot v\}$, where $v \in \Gamma$, converge to a random end $\omega(\tau) \in \Ends(\Gamma)$; the distribution of the limits $\omega(\tau)$ is given by the $\mu$-stationary measure~$\nu_\mu$.
\end{corollary}
\begin{proof}
Proposition~\ref{prop:compact_proximal} says that the measure space $(\overbar\Gamma, \nu_\mu)$ is a $\mu$-boundary of $G$ in the sense of Furstenberg. This means by definition that, for almost every path $\tau = \{\tau_i\}$ of the $\mu$-random walk, the sequence $\{\tau_i \cdot \nu_\mu\}$ converges weakly to a random Dirac measure $\delta_z$, where $z = z(\tau) \in \overbar\Gamma$. 

Then Proposition~\ref{prop:measure-point} implies that all the sequences $\{\tau_i \cdot v\}$, where $v \in \Gamma$, converge to~$z(\tau)$. By Assertion~\ref{asser:bnd}, the distribution of the limits $z(\tau)$ is given by the measure~$\nu_\mu$. Since $\nu_\mu(\Ends(\Gamma)) = 1$ (see Proposition~\ref{prop:compact_proximal}), this implies the required assertion.
\end{proof}

\subsection{Maximality of the boundary $\Ends(\Gamma)$}
\label{subs:4.3}

According to Corollary \ref{prop:ends_proximal}, the pair $(\Ends(\Gamma), \nu_\mu)$ is a $\mu$-boundary of $G$. Now we would like to show that it is a maximal $\mu$-boundary. 

\smallskip

Let $X$ be a measurable $G$-space with a (quasi-invariant) measure $\nu$. Recall that the action of $G$ on $X$ is called {\em ergodic} if $\nu(Y) \in \{0,1\}$ for each $G$-invariant measurable subset $Y \subset X$.

\begin{lemma}[\cite{Kaimanovich:2000}]
\label{lem:ergo}
Let $G$ be a countable group, $\mu \in \PP(G)$ and $\check\mu \in \PP(G)$ the {\em reflected measure} defined by $\check\mu(g) = \mu(g^{-1})$. Let $(M_+, \nu_+)$ and $(M_-, \nu_-)$ be respectively a $\mu$-boundary and a $\check\mu$-boundary of $G$. Then the action of $G$ on the product $(M_- \times M_+, \nu_- \times \nu_+)$ is ergodic.
\end{lemma}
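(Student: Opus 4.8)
The plan is to realize the diagonal $G$-action on $(M_- \times M_+, \nu_- \times \nu_+)$ as a measurable factor of the time shift on the space of bilateral trajectories of the walk, and then to deduce ergodicity of the action from ergodicity of that shift. First I would set up the bilateral model: let $(Y, \mathbf{m})$ be the space of bilateral increment sequences $\omega = \{y_k\}_{k \in \Z}$ with $\mathbf{m} = \mu^{\otimes \Z}$, and let $T$ be the coordinate shift, which is a Bernoulli shift and hence ergodic. From $\omega$ one reconstructs a bilateral path based at $e$ at time $0$: the future increments $y_1, y_2, \dots$ produce a $\mu$-walk $\{\tau_i\}_{i \geqslant 0}$, while the past increments $y_0, y_{-1}, \dots$ produce (after reflection) a $\check\mu$-walk $\{\sigma_i\}_{i \geqslant 0}$, where $\sigma_i = \tau_{-i}$. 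Since $(M_+, \nu_+)$ is a $\mu$-boundary and $(M_-, \nu_-)$ is a $\check\mu$-boundary, the associated boundary maps are defined $\mathbf{m}$-almost everywhere: $\bnd_+$ sends a path to the point whose Dirac measure is the limit of $\{\tau_i \cdot \nu_+\}$, and $\bnd_-$ does the same for $\{\sigma_i \cdot \nu_-\}$. By Theorem \ref{thm:alw-conv} these maps push $P_\mu$ and $P_{\check\mu}$ forward to $\nu_+$ and $\nu_-$ respectively. Together they give a measurable map $\Phi = (\bnd_-, \bnd_+) : Y \to M_- \times M_+$.

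Next I would record the two structural properties on which everything hinges. The first is that $\Phi_* \mathbf{m} = \nu_- \times \nu_+$: once the walk is pinned at $e$ at time $0$, the past increments and the future increments are independent, so $\bnd_-$ (a function of the past) and $\bnd_+$ (a function of the future) are independent, and their joint law is the product. The second is an equivariance/cocycle identity linking the shift to the group action: applying $T$ advances time by one step and then recenters the trajectory at the new basepoint $\tau_1 = y_1$, which translates all positions on the left by $y_1^{-1}$ and hence moves each boundary point by $y_1^{-1}$, so that
$$\Phi(T\omega) = y_1^{-1} \cdot \Phi(\omega) \qquad \text{for } \mathbf{m}\text{-a.e. } \omega,$$
where $y_1^{-1} \in G$ acts diagonally on $M_- \times M_+$. (The precise $G$-element is immaterial for what follows, since $A$ below is $G$-invariant.)

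With these in hand the conclusion is quick. Let $A \subseteq M_- \times M_+$ be a $G$-invariant measurable set. Since $G$ is countable, we may assume $A$ is invariant under every $g \in G$ on a common full-measure set, so the equivariance identity gives $\omega \in \Phi^{-1}(A) \iff T\omega \in \Phi^{-1}(A)$ up to $\mathbf{m}$-null sets; that is, $\Phi^{-1}(A)$ is a $T$-invariant subset of $Y$. Because $T$ is ergodic, $\mathbf{m}(\Phi^{-1}(A)) \in \{0,1\}$, and since $\Phi$ transports $\mathbf{m}$ to $\nu_- \times \nu_+$ we obtain $(\nu_- \times \nu_+)(A) \in \{0,1\}$, which is exactly ergodicity of the $G$-action on the product.

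I expect the main obstacle to be bookkeeping rather than conceptual: making the bilateral model precise, in particular getting the reflection and recentering conventions right so that the past of the bilateral walk genuinely is a $\check\mu$-walk and the recentering group element in the cocycle identity is correctly identified, and then justifying the almost-sure independence of past and future given the present (from which $\Phi_*\mathbf{m} = \nu_-\times\nu_+$ follows). Once the cocycle identity and the product-measure identity are in place, ergodicity of the Bernoulli shift does the rest.
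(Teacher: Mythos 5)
Your argument is correct and is essentially the standard proof from the cited source: the paper itself gives no proof of this lemma, only a remark that Kaimanovich's argument (which is exactly the bilateral-Bernoulli-shift factor argument you describe, with the cocycle identity $\Phi(T\omega)=y_1^{-1}\cdot\Phi(\omega)$ and the independence of past and future giving $\Phi_*\mathbf{m}=\nu_-\times\nu_+$) carries over verbatim from maximal to arbitrary boundaries. Your reconstruction matches that route, and the bookkeeping points you flag (the reflection convention making the past a $\check\mu$-walk, and $T$-invariance of $\Phi^{-1}(A)$ only mod null sets, which suffices for ergodicity of the shift) are exactly the right ones and all check out.
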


Observe that in \cite{Kaimanovich:2000}, Lemma \ref{lem:ergo} is stated and proved in the case when $(M_+, \nu_+)$ and $(M_-, \nu_-)$ are maximal, respectively, $\mu$- and $\check\mu$-boundaries. But the proof works verbatim for the general case.

\begin{prop}
\label{prop:maximality}
Let $G$ be a finitely generated non-abelian $\Z^n$-free group acting on a $\Z^n$-tree $\Gamma$ for some $n \in \N$ so that $\Gamma$ has no proper $G$-invariant subtrees. Let $\mu$ be a non-degenerate probability measure on $G$ and $\nu_\mu$ the unique $\mu$-stationary measure on the space $\Ends(\Gamma)$ of open ends of~$\Gamma$. If $\mu$ has finite first moment with respect to a finite word metric on $G$, then the measure space $(\Ends(\Gamma), \nu_\mu)$ is a maximal $\mu$-boundary of $G$.
\end{prop}
\begin{proof} In order to prove the theorem, we are going to construct a map 
$$S \colon \Ends(\Gamma) \times \Ends(\Gamma) \to 2^G$$ 
and then show that it satisfies all the requirements of the Strip Criterion (see Corollary \ref{cor:strip-criterion} and Remark \ref{rem:empty-strips}).

\smallskip

Let $a, b \in \Ends(\Gamma)$. There exists a unique bi-infinite geodesic $\langle a, b \rangle$ in $\Gamma$ from $a$ to $b$. We define
$$S(a,b) = \{g \in G \mid g \cdot \base \in \langle a, b \rangle\}.$$
Obviously, the map $S \colon \Ends(\Gamma) \times \Ends(\Gamma) \to 2^G$ is $G$-equivariant.

\smallskip

{\bf Claim 0.} {\em The map $S$ is measurable in the sense that $S^{-1}(M)$ is measurable for any $M \subset G$.}

\smallskip 

For a vertex $v \in \Gamma$, define
$$\label{Omega-def} \Omega_v = \{(a, b) \in \Ends(\Gamma) \times \Ends(\Gamma) \mid v \in \langle a, b \rangle\}$$
Our definitions immediately imply that
\begin{equation}
\label{Omega-eq}
S(a,b) = \{g \in G \mid (a, b) \in \Omega_{g \cdot \base} = g \cdot \Omega_{\base}\}
\end{equation}
so that for each $g \in G$ we have
$$S^{-1}(g) = \Omega_{g^{-1} \cdot \base}$$
By Lemma \ref{lem:open-product}, for all $v \in \Gamma$ the sets $\Omega_v$ are open and hence Borel measurable. In particular, the set $S^{-1}(g) = \Omega_{g^{-1} \cdot \base}$ is Borel measurable for every $g \in G$. Hence, {\bf Claim 0} follows by the countability of~$G$.

\smallskip

{\bf Claim 1.} {\em Let $\nu_+$ be a $\mu$-stationary measure on $\Ends(\Gamma)$ and $\nu_-$ be a $\check\mu$-stationary measure on $\Ends(\Gamma)$, where $\check\mu$ is the reflected measure of $\mu$ defined by $\check\mu(g) = \mu(g^{-1})$. Then, for $(\nu_- \times \nu_+)$-a.\,e.~pair $(a,b) \in \Ends(\Gamma) \times \Ends(\Gamma)$, the set $S(a,b)$ is non-empty.}

\smallskip

It follows from~\eqref{Omega-eq} that
\begin{equation}
\label{Omega-eq-2}
S(a,b) = \{g \in G \mid g^{-1} \cdot (a, b) \in \Omega_\base\}.
\end{equation}

Since $\Gamma$ is countable and admits a minimal action of a group (while $\Ends(\Gamma) \neq \varnothing$), it follows that $\Omega_\base$ is not empty (indeed, otherwise $\Gamma$ would have closed ends and the subset of vertices that are not closed ends would form a proper invariant subtree, which contradicts the minimality assumption).

Lemma \ref{lem:open-product} says that $\Omega_\base$ is open with respect to the product topology. Therefore, since $\Omega_\base$ is not empty, there exist non-empty open sets $A, B \subset \Ends(\Gamma)$ with $A \times B \subset \Omega_\base$. Applying the assertion (iii) of Proposition \ref{prop:where-stationary-measures-lives} to the measures $\nu_-$ and $\nu_+$, we see that $(\nu_- \times \nu_+)(\Omega_\base) > 0$.

Recall that the action of $G$ on the space $(\Ends(\Gamma) \times \Ends(\Gamma), \nu_- \times \nu_+)$ is ergodic (Lemma \ref{lem:ergo}). Consequently, since $(\nu_- \times \nu_+)(\Omega_\base) > 0$, it follows by~\eqref{Omega-eq-2} that for $(\nu_- \times \nu_+)$-a.\,e.~pair $(a,b) \in \Ends(\Gamma) \times \Ends(\Gamma)$, the set $S(a, b)$ is non-empty and {\bf Claim 1} follows.

\smallskip

Now, recall that by assumption $G$ is a finitely generated non-abelian group acting freely and without inversions on $\Gamma$, and $\Gamma$ has no proper $G$-invariant subtrees. Let $Z$ be a finite generating set of $G$ and let $| \cdot |_G$ be the word metric on $G$ with respect to $Z$.

To finish the proof of the theorem we are going to prove the following claim.

\smallskip

{\bf Claim 2.} {\em For any $m \in \{1, \ldots, n\}$ there exists $C(m) \in \mathbb{N}$ such that for any maximal $\Z^m$-subtree $T$ of $\Gamma$, and any distinct $a, b \in \Ends(T)$ we have
$$\card\left(\{g \in G \mid g \cdot \base \in \langle a, b \rangle\} \cap B_G(k)\right) \leqslant C(m) k^m,$$
where $B_G(k) = \{g \in G \mid |g|_G \leqslant k\}$.}

\smallskip

Fix a maximal $\Z^m$-subtree $T$ and assume that there exists at least one $g \in G$ such that $g \cdot \base \in \langle a, b \rangle$, otherwise the claim holds trivially for $T$. Define $H \colon = Stab_G(T)$. Since $G$ is finitely generated, $H$ is also finitely generated, and it follows that $H$ is quasi-isometrically embedded into $G$ (see Subsection \ref{subs:Z^n_trees}). Let $R$ be a finite generating set of $H$ and let $| \cdot |_H$ be the word metric on $H$ with respect to $R$.

Denote by $T_{\base,m}$ the maximal $\Z^m$-subtree of $\Gamma$ containing $\base$. In order to prove the claim we use induction on $m$. 

\smallskip

If $m = 1$, then $T$ is a $\Z$-tree and $\langle a, b \rangle$ is isometric to $\Z$. Consider two cases.

\begin{enumerate}
\item[(1.1)] Assume that $T = T_{\base,1}$, that is, $\base \in T$. If $g \in G$ is such that $g \cdot \base \in \langle a, b \rangle$, then $g \in H$ and we have 
$$\{g \in G \mid g \cdot \base \in \langle a, b \rangle\} = \{g \in H \mid g \cdot \base \in \langle a, b \rangle\}.$$
Observe that $H$ is a finitely generated free group which quasi-isometrically embeds into $T$ by means of the map $h \to h \cdot \base$ for any $h \in H$. 

Now, let $g \in H$ be such that $g \in B_G(k)$, that is, $|g|_G \leqslant k$. Since $H$ quasi-isometrically embeds into $G$, it follows that $|g|_H \leqslant C_1 k + C_2$ for some constants $C_1, C_2 \in \N$ which depend only on $H$ and $G$. Then, it follows that 
$$d_T(\base, g \cdot \base) \leqslant C_3 (C_1 k + C_2) + C_4 \leqslant C' k,$$
where $d_T$ is the distance on $T$ inherited from $\Gamma$ and $C'$ depends only on $H$ and $T$. But now observe that since $\langle a, b \rangle$ is a linear subtree of $T$, the number of points on $\langle a, b \rangle$ at distance not greater than $C' k$ from $\base$ is bounded by $2 C' k + 1$. At the same time, the action of $H$ on $T$ is free, so, the number of elements $g \in H$ such that $g \in B_G(k)$ and $g \cdot \base \in \langle a, b \rangle$ cannot be greater than $2 C' k + 1$. It means that the required inequality 
$$\card\left(\{g \in G \mid g \cdot \base \in \langle a, b \rangle\} \cap B_G(k)\right) \leqslant C(T_{\base,1}) k$$
holds for $C(T_{\base,1}) = 2 C' + 1$.

\item[(1.2)] Assume that $T \neq T_{\base,1}$. Observe that if
$$g_0 \in \{g \in G \mid g \cdot \base \in \langle a, b \rangle\} \cap B_G(k),$$
then $g_0^{-1} \cdot a$ and $g_0^{-1} \cdot b$ are distinct ends of $T_{\base,1}$ and we have
$$\card\left(\{g \in G \mid g \cdot \base \in \langle a, b \rangle\} \cap B_G(k)\right)$$
$$\leqslant \card\left(\{g \in G \mid g \cdot \base \in \langle g_0^{-1} \cdot a, g_0^{-1} \cdot b \rangle\} \cap B_G(2k)\right).$$
Indeed, for every $g_1 \in \{g \in G \mid g \cdot \base \in \langle a, b \rangle\} \cap B_G(k)$, the element $g_0^{-1} g_1$ belongs to $B_G(2k)$ since 
$$|g_0^{-1} g_1|_G \leqslant |g_0|_G + |g_1|_G \leqslant 2k,$$
and 
$$(g_0^{-1} g_1) \cdot \base \in \langle g_0^{-1} \cdot a, g_0^{-1} \cdot b \rangle.$$
In other words, $g_0^{-1} g_1 \in \{g \in G \mid g \cdot \base \in \langle g_0^{-1} \cdot a, g_0^{-1} \cdot b \rangle\} \cap B_G(2k)$ and the required inequality follows. Finally, from (1.1) we have that
$$\card\left(\{g \in G \mid g \cdot \base \in \langle g_0^{-1} \cdot a, g_0^{-1} \cdot b \rangle\} \cap B_G(2k)\right) \leqslant C(T_{\base,1}) \cdot (2k).$$
That is, $C(T) = 2 C(T_{\base,1})$.

\end{enumerate}

Hence, the constant $C(1)$, which works for every maximal $\Z$-subtree of $\Gamma$ can be taken to be $2 C(T_{\base,1})$. This concludes the proof of the claim in the case when $m = 1$.

\smallskip

Suppose $m > 1$ and assume that {\bf Claim 2} holds for $m - 1$.

Recall from Subsection \ref{subs:Z^n_trees} that one can define the metric $d_{m-1}$ on the set $\Xi_{m-1}(T)$ of all maximal $\Z^{m-1}$-subtrees of $T$ and $d_{m-1}$ takes values in $\Z$. Consider two cases.

\begin{enumerate}
\item[(m.1)] Assume that $T = T_{\base,m}$, that is, $\base \in T$. In particular, $T$ contains $T_{\base,m-1}$.

Observe that $\langle a, b \rangle$ may have non-empty intersection with at most countably many $\Z^{m-1}$-subtrees of $T$, which we can enumerate by integers
$$\ldots, T_{-r}, \ldots, T_{-1}, T_0, T_1, \ldots, T_r, \ldots,$$
where $T_0$ is the ``closest'' one to $T_{\base,m-1}$ with respect to the metric $d_{m-1}$. 

There exists a constant $M > 0$ such that for every $h \in \{g \in G \mid g \cdot \base \in \langle a, b \rangle\} \cap B_G(k)$ we have 
$$d_{m-1}(T_{\base,m-1}, h \cdot T_{\base,m-1}) \leqslant M k.$$ 
Indeed, since $h \cdot \base \in \langle a, b \rangle$, it follows that $h \in H$. Next, since $H$ quasi-isometrically embeds into $G$, from $|h|_G \leqslant k$ it follows that $|h|_H \leqslant C_1 k + C_2$ for some constants $C_1, C_2 \in \N$ which depend only on $H$ and $G$. Now, $d_{m-1}$ satisfies the triangle inequality, so if
$$L = \max_{h_i \in R} d_{m-1}(T_{\base,m-1},  h_i \cdot T_{\base,m-1}),$$
then 
$$d_{m-1}(T_{\base,m-1}, h \cdot T_{\base,m-1}) \leqslant L \cdot (C_1 k + C_2)$$ 
and existence of the required $M$ follows.

Let
$${\mathcal T}_k \colon = \{S \in \{T_i\} \mid d_{m-1}(S, T_{\base,m-1}) \leqslant M k\}.$$
Obviously, $\card({\mathcal T}_k) \leqslant 2 M k + 1$ and for every $h \in \{g \in G \mid g \cdot \base \in \langle a, b \rangle\} \cap B_G(k)$ there exists $S \in {\mathcal T}_k$ such that $h \cdot \base \in \langle a, b \rangle \cap S = \langle a_S, b_S \rangle$, where $a_S, b_S \in \Ends(S)$. Thus, we have
$$\card\left(\{g \in G \mid g \cdot \base \in \langle a, b \rangle\} \cap B_G(k)\right)$$
$$\leqslant \sum_{S \in {\mathcal T}_k} \card\left(\{g \in G \mid g \cdot \base \in \langle a_S, b_S 
\rangle\} \cap B_G(k)\right)$$
$$\leqslant (2 M k + 1) (C(m-1) k^{m-1}) \leqslant C(T_{\base,m}) k^m,$$
where $C(T_{\base,m})$ is a constant which depends on $T_{\base,m}$ and $Z$, but not on $a$ and $b$.

\item[(m.2)] Assume that $T \neq T_{\base,m}$, that is, $\base \notin T$.

If 
$$g_0 \in \{g \in G \mid g \cdot \base \in \langle a, b \rangle\} \cap B_G(k),$$ 
then using the same argument as in (1.2) we can show that
$$\card\left(\{g \in G \mid g \cdot \base \in \langle a, b \rangle\} \cap B_G(k)\right)$$
$$\leqslant \card\left(\{g \in G \mid g \cdot \base \in \langle g_0^{-1} \cdot a, g_0^{-1} \cdot b \rangle\} \cap B_G(2k)\right).$$
Then from (m.1) we obtain
$$\card\left(\{g \in G \mid g \cdot \base \in \langle g_0^{-1} \cdot a, g_0^{-1} \cdot b \rangle\} \cap B_G(2k)\right) \leqslant C(T_{\base,m}) \cdot (2k)^m.$$
That is, $C(T) = 2^m C(T_{\base,m})$.

\end{enumerate}

By setting $C(m) \colon = 2^m C(T_{\base,m})$ we finish the induction step and {\bf Claim 2} is proved.

\end{proof}

\end{document}